\documentclass[12pt]{article}
\usepackage{amssymb,amsthm,amsmath}
\usepackage{hyperref}
\usepackage{graphicx}
\usepackage[hypcap]{caption}
\usepackage[a4paper, margin=2.4cm]{geometry}

\makeatletter\newtheorem*{rep@theorem}{\rep@title}\newcommand{\newreptheorem}[2]{\newenvironment{rep#1}[1]{\def\rep@title{#2 \ref{##1}}\begin{rep@theorem}}{\end{rep@theorem}}}\makeatother

\newtheorem{theorem}{Theorem}[section]
\newreptheorem{theorem}{Theorem}
\newtheorem{corollary}[theorem]{Corollary}
\newtheorem{lemma}[theorem]{Lemma}
\theoremstyle{definition}
\newtheorem{remark}[theorem]{Remark}

\newcommand{\R}{\mathbb R}

\title{On the number of ordinary conics}
\author{\and Thomas Boys
\and Claudiu Valculescu\thanks{ EPFL, Lausanne, Switzerland. Research partially supported by Swiss National Science Foundation
Grants 200020-144531 and 200021-137574.}\and
Frank de Zeeuw\footnotemark[1]}
\begin{document}
\date{}
\maketitle

\begin{abstract}
We prove a lower bound on the number of ordinary conics determined by a finite point set in $\R^2$.
An \emph{ordinary conic} for $S\subset \R^2$ is a conic that is determined by five points of $S$, and contains no other points of $S$.
Wiseman and Wilson proved the Sylvester-Gallai-type statement that if a finite point set is not contained in a conic, then it determines at least one ordinary conic.
We give a simpler proof of this statement, and then combine it with a theorem of Green and Tao to prove our main result:
If $S$ is not contained in a conic and has at most $c|S|$ points on a line, then $S$ determines $\Omega_c(|S|^4)$ ordinary conics.
We also give constructions, based on the group law on elliptic curves, that show that the exponent in our bound is best possible.
\end{abstract}

\section{Introduction}

The Sylvester-Gallai theorem states that any non-collinear finite set $S\subset \R^2$ determines an \emph{ordinary line}, i.e., a line with exactly two points from $S$.
See \cite{BMP,GT} for the history of this theorem.
It is natural to ask for the minimum number of ordinary lines determined by a non-collinear point set.
Kelly and Moser \cite{KM} proved that a non-collinear $S\subset \R^2$ determines at least $3|S|/7$ ordinary lines,
and Green and Tao \cite{GT} proved that a sufficiently large  $S$ determines at least $|S|/2$ ordinary lines, which is the best possible lower bound because of certain constructions on cubic curves (algebraic curves of degree three).
The proof in \cite{GT} in fact gives a stronger \emph{structural} statement: Any  set $S$ with a linear number of ordinary lines must have most of its points on a cubic curve (see Section \ref{sec:ordinarylines}).

We can ask similar questions for other curves instead of lines.
Elliott \cite{E} proved that every finite $S\subset \R^2$, not contained in a line or circle, determines at least one \emph{ordinary circle}, i.e., a circle containing exactly three points from $S$.
Nassajian Mojarrad and De Zeeuw \cite{NZ} used the structural result from \cite{GT} to prove that such an $S$ determines at least $n^2/4-O(n)$ ordinary circles, which is best possible up to the linear term.

Wiseman and Wilson \cite{WW} proved that any finite $S\subset \R^2$, not contained in a conic, determines at least one \emph{ordinary conic}, i.e., a conic that contains exactly five points of the set,
\emph{and} is determined by these five points.
Here a conic is any zero set of a quadratic polynomial, which may consist of two lines.
Note that for conics, unlike for lines or circles, it makes a difference whether one just requires a conic with five points, or one that is also determined by those five points.
Indeed, if one takes four points on a line and one point off it, then there are infinitely many conics containing these five points (namely the line with the four points combined with any line through the other point),
but they are not determined by the points.
Similarly, five collinear points lie on infinitely many conics.

The proof in \cite{WW}, although elementary, is rather long and intricate.
Devillers and Mukhopadhyay \cite{DM} claimed to give a shorter proof, but we think their proof is incorrect; see Remark \ref{rem:DM} for an explanation.
Recently (and concurrently with our work) Czapli\'{n}ski et al. \cite{Polish} gave a shorter proof, using some more sophisticated algebraic geometry.
We also give a short proof, as preparation for our main theorem.

In this paper we prove the following result, making progress on Problem 7.2.7 of \cite{BMP}.

\begin{theorem}
\label{thm:main}
Let $0<c<1$.
Let $S\subset \R^2$ be a finite set that is not contained in a conic, and that has at most $c|S|$ points on a line.
Then $S$ determines $\Omega_c(|S|^4)$ ordinary conics.
\end{theorem}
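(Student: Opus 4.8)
\emph{The plan.} I would fix a non-collinear triple $T=\{a,b,c\}\subseteq S$ and apply the standard quadratic (Cremona) involution $\tau_T$ with fundamental points $a,b,c$: it blows up $a,b,c$, contracts the lines $\overline{ab},\overline{bc},\overline{ca}$, and carries conics through $T$ to lines. Writing $S_T:=\tau_T(S\setminus T)$, a direct check shows that each ordinary line of $S_T$ that avoids $\{a,b,c\}$ pulls back to a (smooth) conic through $a,b,c$ meeting $S$ in exactly five points, hence to an ordinary conic of $S$ through $T$; so $S_T$ is the set on which to run the known line-counting machinery. Since each ordinary conic has exactly five points and therefore lies on at most $\binom{5}{3}=10$ triples, it suffices to exhibit $\Omega_c(|S|^3)$ triples $T$, each with $\Omega_c(|S|)$ ordinary lines of $S_T$ avoiding $\{a,b,c\}$.

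\emph{Counting the useful triples.} Call $T$ \emph{good} if it is non-collinear and the three lines $\overline{ab},\overline{bc},\overline{ca}$ together carry at most $(1-\varepsilon)|S|$ points of $S$, for a small $\varepsilon=\varepsilon(c)<1-c$. For good $T$ one gets $|S_T|\ge|S|-\big(|\overline{ab}\cap S|+|\overline{bc}\cap S|+|\overline{ca}\cap S|\big)\ge\varepsilon|S|=\Omega_c(|S|)$, and moreover $S_T$ is never contained in a line: if it were, its $\tau_T$-preimage would be a (possibly degenerate) conic containing all of $S$, contradicting the hypothesis. And there are $\Omega_c(|S|^3)$ good triples: a non-collinear triple that fails to be good must have two of its three lines each carrying more than $\tfrac12(1-\varepsilon-c)|S|$ points, but there are only $O_c(1)$ such ``fat'' lines (since $\sum_\ell\binom{|\ell\cap S|}{2}=\binom{|S|}{2}$), any two of them meet in one point, and each carries at most $c|S|$ points, so the number of non-good non-collinear triples is $O_c(|S|^2)$, whereas the number of non-collinear triples is $\binom{|S|}{3}-\sum_\ell\binom{|\ell\cap S|}{3}\ge\tfrac{1-c}{6}|S|^3(1-o(1))$ by the hypothesis $|\ell\cap S|\le c|S|$.

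\emph{Producing many ordinary lines of $S_T$.} Fix a good $T$ and apply the Green--Tao theorem to the non-collinear set $S_T$ with a large absolute constant $K$. If $S_T$ has more than $K|S_T|$ ordinary lines then, as at most $3(|S_T|-1)$ of them pass through $\{a,b,c\}$, at least $\Omega_c(|S|)$ of them avoid $\{a,b,c\}$ and we are done. Otherwise Green--Tao's structure theorem places $S_T$, up to $O_K(1)$ points, on a line (which the previous paragraph already excludes), on a conic union a line, or on a coset of a subgroup of the smooth locus of an irreducible cubic; analysing the (necessarily few) ordinary lines of such configurations through the relevant group law shows that $\Omega(|S_T|)$ of them still avoid $\{a,b,c\}$, \emph{unless} the configuration has a very rigid special shape — for instance a coset punctured at exactly the points $a,b,c$ — in which case pulling back by $\tau_T$ forces $S$ itself to lie, up to $O_c(1)$ points, on a curve of degree at most six that is not a conic. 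This last, \emph{global} case is treated directly: a conic meets such a curve in at most twelve points, and counting the conics through five points of $S$ lying on the curve — again via the relevant group law, and invoking the Wiseman--Wilson ``at least one'' bound on subconfigurations when needed — already produces $\Omega_c(|S|^4)$ ordinary conics (and the same count is what shows the exponent $4$ is sharp). The main obstacle is precisely this last reconciliation: guaranteeing that every structured output of Green--Tao, after pull-back through $\tau_T$, either still leaves $\Omega(|S_T|)$ ordinary lines away from the three fundamental points, or else pins $S$ onto a global low-degree curve that can be counted by hand. Everything else is bookkeeping with $\tau_T$ and elementary double counting.
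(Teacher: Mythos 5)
Your skeleton --- push $S$ through a non-collinear triple so that conics through the triple become lines, harvest ordinary lines avoiding three exceptional points via Green--Tao, and divide by the multiplicity $\binom{5}{3}$ --- is the same as the paper's, which uses the Veronese map followed by a projection from the $2$-flat spanned by the three image points where you use the Cremona involution (the paper notes these maps are essentially interchangeable), and your count of $\Omega_c(|S|^3)$ good triples is fine. But there are two genuine gaps. The first is your claim that $S_T$ cannot be collinear for a good triple $T=\{a,b,c\}$: the $\tau_T$-preimage of a line is a conic through $a,b,c$ containing $S\setminus\Delta_{abc}$, \emph{not} all of $S$, because the points of $S$ on the open edges of the triangle are contracted to the exceptional points and are under no obligation to lie on that conic. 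Concretely, if $S$ consists of $(1-c)|S|$ points on an irreducible conic $C$ and $c|S|$ points on a disjoint line $\ell$, then every triple with $a,b\in\ell$ and $c\in C$ is good in your sense (its three lines carry at most $c|S|+6$ points of $S$), yet $S_T$ is a collinear set plus at most three exceptional points, so all of its ordinary lines may pass through the forbidden points; and there are $\Theta_c(|S|^3)$ such triples. Goodness therefore does not suffice, and repairing this requires exactly the dichotomy the paper's proof is organized around: either every non-collinear triple has $|B_{pqr}|\ge(1-c)^2|S|$, in which case a B\'ezout argument shows that only $O(|S|^2)$ triples can have $B_{pqr}$ collinear, or some triangle $\Delta_{p_1q_1r_1}$ carries a constant fraction of $S$, in which case the triples are drawn from two edges of that triangle together with a witness point $v$ off them to force non-collinearity of the image.

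The second gap is your handling of the structured output of Green--Tao, which by your own admission (``the main obstacle is precisely this last reconciliation'') is not closed. No global degree-six curve case and no appeal to Wiseman--Wilson are needed. The paper's Lemma \ref{lem:avoidset} settles this cleanly: if $S_T$ has fewer than $K|S_T|$ ordinary lines, then either all but $O(K)$ of its points lie on a line, in which case one takes the pencil of ordinary lines through a point that is off that line and outside the forbidden set, or all but $O(K)$ lie on a cubic and at least $|S_T|/2-O(K)$ of the ordinary lines are \emph{tangent} to that cubic; since at most $d(d-1)=6$ tangent lines of a degree-$d$ curve pass through any fixed point (Lemma \ref{lem:tangents}), at most $18$ of these tangent ordinary lines can meet $\{a,b,c\}$. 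With that lemma your third paragraph collapses to a single application, and the ``rigid special shape'' escape clause, together with the unsupported claims attached to it, disappears.
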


It is necessary to put some condition on the maximum number of collinear points in $S$, because of the following construction.
Let $S$ consist of $|S|-k$ points on a line $\ell$ and $k$ points off $\ell$.
Then an ordinary conic contains at most two points from $\ell$,
so the number of ordinary conics is bounded by
$\binom{|S|-k}{2}\binom{k}{3}+(|S|-k)\binom{k}{4}+\binom{k}{5}$.
For $k=o(|S|^{2/3})$, this quantity is $o(|S|^4)$, so the bound in Theorem \ref{thm:main} would not hold.

The exponent $4$ is best possible in this theorem.
This follows from constructions based on group laws on cubic curves, which we will introduce in Section \ref{sec:construction}.

\begin{theorem}\label{thm:construction}
There exist arbitrarily large finite sets $S\subset \R^2$, not contained in a conic and with no four on a line, that determine no more than $\frac{1}{24}|S|^4$ ordinary conics.\\
There exist arbitrarily large finite sets $S\subset \R^2$, not contained in a conic and with at most $|S|/2$ points on a line,
that determine no more than $\frac{7}{384}|S|^4+O(|S|^3)$ ordinary conics.
\end{theorem}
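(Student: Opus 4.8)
The plan is to place both point sets inside the smooth locus of a plane cubic curve and exploit the group law there. Recall the dictionary: if $E\subset\R\mathbb{P}^2$ is a cubic curve and $O$ is a suitable point of its smooth locus $E_{\mathrm{ns}}$, then three points of $E_{\mathrm{ns}}$ are collinear iff their sum is $O$, and, more generally, the intersection divisor of any conic with $E$ (a divisor of degree six) sums to $O$. Hence, for five points $P_1,\dots,P_5$ of $E$ in general position, the unique conic through them meets $E$ again at $P_6=\ominus(P_1\oplus\cdots\oplus P_5)$, and for $S\subseteq E$ this conic is ordinary essentially only when $P_6$ coincides with one of the $P_i$ (the conic is then tangent to $E$ there); otherwise $P_6$ is a genuine sixth point, which lies in $S$ in the situations we engineer, making the conic non-ordinary — apart from reducible conics, which we bound separately. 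I use a smooth cubic for the first set and the reducible nodal cubic $C_0\cup\ell$ for the second.

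\emph{First construction.} Take $E$ a smooth cubic with $E(\R)$ containing a cyclic subgroup $H$ of order $n$ (possible for every $n$, since $E(\R)\cong\R/\mathbb{Z}$ or $\R/\mathbb{Z}\times\mathbb{Z}/2$), and set $S=H$. Any line meets $E$ in at most three points, so $S$ has no four collinear points. For any five points of $H$ the sixth intersection point of their conic with $E$ again lies in $H$, so the conic is non-ordinary unless it is tangent to $E$ at one of the five, equivalently $2P_i=\ominus\bigoplus_{j\ne i}P_j$ for that (necessarily unique) $P_i$. Counting such subsets by first choosing the tangency point $P\in H$ ($n$ ways) and then a four-element subset of $H\setminus\{P\}$ summing to $\ominus 2P$ (there are $\binom n4/n+O(n^{2})$ of these), and adding the $O(n^{3})$ reducible exceptions, the number of ordinary conics is $\binom n4+O(n^{3})$, which is at most $\tfrac1{24}|S|^{4}$ for a suitable large $n$ since $\binom n4<\tfrac1{24}n^{4}$.

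\emph{Second construction.} Let $E=C_0\cup\ell$ with $C_0$ a circle and $\ell$ the line at infinity, so the two nodes $C_0\cap\ell$ form a complex conjugate pair; then the generalised Jacobian of $E$ is the twisted one-dimensional torus, $E_{\mathrm{ns}}(\R)\cong S^{1}\times\mathbb{Z}/2$ with components $C_0(\R)$ and $\ell(\R)$, and the intersection divisor of a conic not containing $\ell$ — four points on $C_0$ and two on $\ell$ — still sums to $O$ (equivalently, record a point of $C_0$ by an angle and a point of $\ell$ by a direction, with addition reading off chords). Choose a cyclic subgroup $\Gamma\le E_{\mathrm{ns}}(\R)$ of order $2m$ meeting both components: it consists of $m$ points $T\subset C_0$ together with $m$ collinear points $L\subset\ell$, and $S:=T\cup L=\Gamma$ has $|S|=2m$ with exactly $|S|/2$ points on $\ell$ (and no line through more), and lies on no conic. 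Classify ordinary conics by the number $k$ of the five points lying on $\ell$. If $k\ge 3$ the conic contains $\ell$, hence all of $L$, so it is not ordinary; if $k=0$ the conic is $C_0$, not ordinary; if $k=1$ the divisor relation forces the second point of the conic on $\ell$ back into $L$, so the conic is ordinary only when tangent to $\ell$ there, and there are $\approx m^{4}/24=\tfrac1{384}|S|^{4}$ of these; if $k=2$ the relation forces the fourth point of the conic on $C_0$ back into $T$, so the conic is ordinary only when tangent to $C_0$ at one of its three points on $C_0$, and there are $\approx m^{4}/4=\tfrac6{384}|S|^{4}$ of these. Adding the $O(|S|^{3})$ reducible exceptions gives $\tfrac7{384}|S|^{4}+O(|S|^{3})$ ordinary conics.

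The difficulties are bookkeeping rather than conceptual, so I expect the main obstacle to be keeping the error terms under control. One must make the group law on the reducible cubic $C_0\cup\ell$ precise — or carry it out with the elementary angle/direction model — and verify that the degree-six divisor identity persists on degenerating to a reducible cubic. One must compute the numbers of $k$-element subsets of a cyclic group with prescribed sum up to their $O(n^{k-2})$ errors so that the constants $\tfrac1{24}$ and $\tfrac7{384}$ emerge (for the first bound absorbing the lower-order terms into the slack in $\binom n4<\tfrac1{24}n^{4}$), and check that reducible conics, tangency coincidences, and configurations in which the conic splits into two lines meeting at a point of $E$ number only $O(|S|^{3})$, each ordinary conic being counted once. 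Existence of the required cubics and subgroups for all large parameters is routine.
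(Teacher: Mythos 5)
Your proposal is correct and follows essentially the same route as the paper: a finite subgroup of an elliptic curve for the first bound, and a ``subgroup'' of the quasi-group on a conic plus a disjoint line (your circle plus line at infinity) for the second, with ordinary conics identified via the degree-six intersection-divisor identity as exactly the tangent conics, and counted by fixing the tangency point. The paper proves the divisor identity for the reducible cubic exactly as you anticipate (Chasles/Cayley--Bacharach with a careful choice of three lines meeting both components), and its counts $\frac{1}{4}|G|\binom{|G|}{3}$ and $\frac{1}{384}|H|^4+\frac{6}{384}|H|^4$ match yours; the only cosmetic difference is that reducible ordinary conics need not be set aside as $O(n^3)$ exceptions, since the divisor identity already accounts for them.
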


The bound in Theorem \ref{thm:main} depends polynomially on $c$, and it deteriorates as $c$ increases.
To compare it with the upper bound in Theorem \ref{thm:construction}, we prove a simplified version of Theorem \ref{thm:main} in Theorem \ref{thm:generalposition}; it states that if no three points of $S$ are collinear, and $S$ is not contained in a conic, then $S$ determines $\frac{1}{120}|S|^4 - O(|S|^3)$ ordinary conics.
Thus even in this ideal situation we do not quite have a tight bound.
We are not sure where the truth lies, but finding it probably requires a more refined analysis of the extremal configurations of \cite{GT} and the role that they play in our proof;
this is how the tight bound in \cite{NZ} was obtained.

To prove Theorem \ref{thm:main}, we first give a simplified proof of the existence theorem of Wiseman and Wilson, which we then modify to prove our bound on the number of ordinary conics.
We initially follow the setup of \cite{WW}, but then we use very different arguments.
The common element is that we use the Veronese map from $\R^2$ to $\R^5$, defined by
\[V(x,y) = (x,y,x^2,xy,y^2).\]
For three carefully chosen points $p,q,r$, both proofs define a map that sends $V(S)\backslash \{p,q,r\}$ to a non-collinear set $S'$ in some plane in $\R^5$.
By the Sylvester-Gallai theorem, in that plane there is an ordinary line for $S'$ with two points corresponding to $s,t\in S$.
It is then shown that $p,q,r,s,t$ determine an ordinary conic.
In \cite{WW}, the map to the plane consists of three successive central projections from the points $V(p),V(q),V(r)$, and most of their proof is spent analyzing the interactions between these three projections.
We, instead, use a single ``hyperprojection'' from the plane spanned by $V(p),V(q),V(r)$, which allows us to see the properties of the map more clearly.
This leads to a shorter and simpler proof, that, moreover, can be extended to a proof of Theorem \ref{thm:main}.
We also note that Wiseman and Wilson rely on a result of Motzkin \cite{Mo} regarding planes in $\R^3$,
which our proof does not.

Czapli\'nski et al. \cite{Polish} use a different map in a similar way,
namely the \emph{Cremona transformation} based at three non-collinear $p,q,r\in \R^2$.\footnote{If the three points are the fundamental points of the projective plane $\mathbb{RP}^2$, then the Cremona transformation is given by $(x:y:z)\mapsto (yz:xz:xy)$ in projective coordinates.}
This map has very similar properties to the combination of Veronese map and projections that are used in \cite{WW} and in our proof;
we suspect that in some sense these maps are the same.
Another difference is that the proof in \cite{Polish} uses Hirzebruch's inequality (which has only been proved using heavy tools), while \cite{WW} and we use the simple Melchior's inequality.

A common feature of the proofs in \cite{WW,Polish} is that the points $p,q,r$ have to be chosen so that two of their spanned lines are ordinary.
This is an obstacle to obtaining many ordinary conics, because there need not be many such triples in $S$.
In our existence proof, we also use such a triple for convenience, but our proof is flexible enough that we can do without this property.
Thus, in our proof of Theorem \ref{thm:main}, we follow the outline above for every non-collinear triple $p,q,r$.
If there are not too many points on a line, we can obtain $\Omega(|S|^3)$ such triples, and thanks to the Green-Tao theorem\footnote{The bound of Kelly and Moser would suffice quantitatively, but our proof requires the structural statement of Green and Tao.},
we will each time find $\Omega(|S|)$ ordinary conics.
Hence the $\Omega(|S|^4)$.

We cannot help but mention one related conjecture.
In \cite{WW,BMP, Polish}, it was asked whether similar statements hold for curves of higher degree:
Given a degree $d$, if a finite $S\subset \R^2$ does not lie on a curve of degree $d$,
does $S$ determine an \emph{ordinary curve} of degree $d$, i.e., a curve containing exactly $d(d+3)/2$ points of $S$ (and perhaps also determined by those points)?

\section{Ordinary lines}\label{sec:ordinarylines}

In this section we collect several facts about ordinary lines, which we will use in our main proofs.
We include some further discussion, because we think that these lemmas raise some interesting questions about ordinary lines that are not completely solved.

\begin{lemma}\label{lem:twolines}
If $S\subset \R^2$ is finite and not collinear, then there exists a point $p\in S$ that is contained in two lines, each with exactly two or three points of $S$.
\end{lemma}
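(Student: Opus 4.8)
The plan is to prove the contrapositive. Call a line a \emph{small line} if it contains exactly two or exactly three points of $S$; we must show that if no point of $S$ lies on two small lines, then $S$ is collinear. So suppose the small lines are pairwise disjoint as subsets of $S$ (this is exactly the negation of the conclusion), and assume for contradiction that $S$ is not collinear. Write $n=|S|$, let $t_k$ be the number of lines containing exactly $k$ points of $S$, and call a line \emph{rich} if it contains at least four points of $S$. For $n\le 3$ the statement is immediate (a triangle), so assume $n\ge 4$; then, since $S$ is non-collinear, Melchior's inequality (in its point form) applies: $t_2\ge 3+\sum_{\ell\text{ rich}}(|\ell|-3)$.

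The first step is a point-budget inequality coming from the hypothesis. Counting incidences between points of $S$ and small lines: each point lies on at most one small line, so there are at most $n$ such incidences, while on the other hand there are exactly $2t_2+3t_3$ of them. Hence $2t_2+3t_3\le n$, and in particular $t_2\le n/2$ and $t_2+3t_3\le n$.

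The second and decisive step uses Melchior's inequality to bound how many points the rich lines can carry. Combining $t_2\le n/2$ with Melchior, the total excess $X:=\sum_{\ell\text{ rich}}(|\ell|-3)$ satisfies $X\le n/2-3$; in particular every rich line has at most $n/2$ points. Now compare with the elementary pair-counting identity $\binom{n}{2}=\sum_{\ell}\binom{|\ell|}{2}=(t_2+3t_3)+\sum_{\ell\text{ rich}}\binom{|\ell|}{2}$, the sum running over all lines through at least two points of $S$. For a rich line, $|\ell|\le n/2$ and $|\ell|-1\le 3(|\ell|-3)$ (the latter holds for every $|\ell|\ge 4$), so $\binom{|\ell|}{2}=\tfrac12|\ell|(|\ell|-1)\le\tfrac{3n}{4}(|\ell|-3)$, whence $\sum_{\ell\text{ rich}}\binom{|\ell|}{2}\le\tfrac{3n}{4}X\le\tfrac{3n}{4}(n/2-3)$. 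Therefore $\binom{n}{2}\le n+\tfrac{3n}{4}(n/2-3)$, which rearranges to $n^2+6n\le 0$ --- impossible. (If there are no rich lines at all, then $X=0$ and the identity already gives $\binom{n}{2}=t_2+3t_3\le n$, forcing $n\le 3$, contrary to $n\ge 4$.) This contradiction proves the lemma.

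I do not expect a genuine obstacle, but the step that requires the right idea is that the counting of the first step is by itself far from sufficient: a near-pencil, with almost all points on a single line and one point off it, satisfies every such point-budget inequality yet has a point on many small lines. What makes the argument work is feeding Melchior's inequality in to bound the total excess $X$ of the rich lines and then comparing with the \emph{exact} pair count $\binom{n}{2}$; the only routine care needed is the degenerate case of no rich lines, disposed of as above.
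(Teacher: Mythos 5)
Your proof is correct. Both your argument and the paper's hinge on Melchior's inequality $t_2\ge 3+\sum_{\ell\ \mathrm{rich}}(|\ell|-3)$ together with the incidence count showing that, under the negation, the small lines can carry few point--line incidences; the difference lies in the second counting ingredient. The paper feeds Melchior into the De Bruijn--Erd\H{o}s bound $T\ge|S|$ on the total number of lines to conclude that $N_2+N_3\ge(|S|+3)/2$, and then a one-line pigeonhole (each small line uses at least two of the at most $|S|$ available incidences) finishes the job. You instead run a contradiction through the exact pair-counting identity $\binom{n}{2}=\sum_\ell\binom{|\ell|}{2}$, using Melchior to cap the total excess $X$ of the rich lines at $n/2-3$ and the pointwise bound $\binom{|\ell|}{2}\le\frac{3n}{4}(|\ell|-3)$ to control their contribution; the arithmetic checks out (including the degenerate case $X=0$), and the step bounding each rich line by $n/2$ points is legitimately extracted from $|\ell|-3\le X$. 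What the paper's route buys is brevity and the explicit quantitative conclusion that there are at least $(|S|+3)/2$ lines with two or three points, which is stronger than mere existence of the desired point $p$; what your route buys is self-containedness, replacing the (non-trivial, though classical) De Bruijn--Erd\H{o}s theorem by an elementary identity, at the cost of a somewhat heavier computation and of losing the quantitative count of small lines.
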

\begin{proof}
Let $T$ be the total number of lines determined by $S$, and $N_k$ the number of lines containing exactly $k$ points of $S$.
Melchior's inequality \cite{Me} (also used in \cite{GT,KM}) states that $N_2\geq3+\sum_{k\geq 4}(k-3)N_k$, which gives us
\[2N_2+ 2N_3 \geq 2 N_2+N_3\geq 3+N_2+N_3+\sum_{k\geq 4}(k-3)N_k\geq T+3.\]
Since $S$ is not collinear, we have $T\ge |S|$ (by the De Bruijn-Erd\H{o}s theorem \cite{DBE}), so
\[N_2 + N_3\geq \frac{T+3}{2}\geq\frac{|S|+3}{2}.\]
This implies that there is at least one point $p\in S$ lying on two lines, each of which contains two or three points of $S$.
\end{proof}

There are point sets in which no point lies on two ordinary lines, namely the sets $X_{2(2k+1)}$ described by Green and Tao \cite{GT}.
It can be seen from their proof that, for large point sets,
the sets $X_{2(2k+1)}$ are the only sets with this property (up to projective equivalence).
Indeed, 
either $S$ has more than $|S|/2$ ordinary lines and then trivially two pass through the same point;
or else $S$ must be equivalent to one of the sets $X_n$, and by inspection only those of the form $X_{2(2k+1)}$ do not have a point on two ordinary lines.

It is thus also not true that there must always be a triple of points in $S$ such that all three lines spanned by the triple are ordinary (an \emph{ordinary triangle}).
Clearly this fails if $S$ is equivalent to $X_{2(2k+1}$, or if $S$ is contained in two lines, but even if one excludes these specific types of sets, there need not be an ordinary triangle.
Indeed, choose any $k$ points and a disjoint line $\ell$, and for every line spanned by the $k$ points, add its intersection point with $\ell$ to the point set.
Then any triple of points must have a pair from the $k$ points or from $\ell$ that would not span an ordinary line.
It is unfortunate that there need not be an ordinary triangle, because that would have made it much easier to prove the existence of an ordinary conic (see the proof of Theorem \ref{thm:generalposition}).

Erd\H os \cite{Er} wrote that he ``thought for a moment'' that if $S$ has no four on a line, then $S$ must have an ordinary triangle; the constructions above would certainly be excluded.
However, as Erd\H os pointed out, 
F\"uredi and Pal\'asti \cite[Section 8]{FP} gave a construction (based on a construction of Burr, Gr\"{u}nbaum, and Sloane \cite{BGS}) for which this fails; this construction is related to the cubic curve constructions in \cite{GT}.
Another weakened version that one could ask is the following: If the point set is not covered by two lines, must there be a triple such that all three spanned lines have at most three points?
In \cite{GWOP}, the Szemer\'edi-Trotter theorem was used to prove that there is a constant $C$ such that any $S\subset\R^2$, not covered by $C$ lines, has a triple of points with all three lines containing at most $C$ points.

\bigskip

The following lemma tells us when we can find an ordinary line for $S$ that avoids some fixed point $p$, which may or may not be in $S$.
 We say that $S\subset \R^2$ is \emph{near-collinear} if all the points of $S$ but one lie on a line.

\begin{lemma}\label{lem:avoid}
If $S\subset\mathbb{R}^2$ is not near-collinear or collinear, then for every point $p$ of the plane, one can find an ordinary line for  $S$ that avoids $p$.
\end{lemma}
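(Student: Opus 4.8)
The plan is to argue by contradiction: I assume that every ordinary line of $S$ passes through the given point $p$, and I derive a contradiction by estimating the number of ordinary lines through $p$ via the Kelly--Moser bound, applied to two nested point sets. A preliminary reduction lets me assume $p\in S$. Indeed, if $p\notin S$, I replace $S$ by $S\cup\{p\}$. This set is still non-collinear (it contains $S$), and it is still not near-collinear: if all but one of its points lay on a line $m$, then either that exceptional point is $p$, forcing $S\subseteq m$, or it is some $s\in S$, forcing $S\setminus\{s\}\subseteq m$, and both possibilities contradict the hypotheses on $S$. Moreover, any ordinary line of $S\cup\{p\}$ that avoids $p$ meets $S\cup\{p\}$, hence $S$, in exactly two points, so it is an ordinary line of $S$ avoiding $p$. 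Thus it suffices to prove the statement when $p\in S$.

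So suppose $p\in S$ and that every ordinary line of $S$ passes through $p$. Since $S$ is non-collinear, the Kelly--Moser theorem gives at least $\tfrac37|S|$ ordinary lines; each of them contains $p$ together with one further point $a\in S\setminus\{p\}$ and no third point of $S$, and distinct ordinary lines give distinct points $a$. Hence at least $\tfrac37|S|$ of the lines through $p$ contain exactly one point of $S\setminus\{p\}$. On the other hand, $S\setminus\{p\}$ is non-collinear, because $S$ is not near-collinear; so Kelly--Moser applied to $S\setminus\{p\}$ produces at least $\tfrac37(|S|-1)$ ordinary lines of $S\setminus\{p\}$. If one of these avoided $p$, it would be an ordinary line of $S$ avoiding $p$, which we excluded; hence each of them passes through $p$ and contains exactly two points of $S\setminus\{p\}$. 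So at least $\tfrac37(|S|-1)$ of the lines through $p$ contain exactly two points of $S\setminus\{p\}$. Since every point of $S\setminus\{p\}$ lies on a unique line through $p$, and the two families of lines just found are disjoint, counting the points of $S\setminus\{p\}$ on lines through $p$ gives
\[
\tfrac37|S| + 2\cdot\tfrac37(|S|-1) \le |S|-1,
\]
which simplifies to $\tfrac27|S|\le-\tfrac17$, a contradiction.

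I expect the only delicate point to be the opening reduction: one must check that adjoining $p$ to $S$ cannot create a near-collinear configuration, and this is exactly where the ``not near-collinear'' hypothesis is used (it is used once more to guarantee that $S\setminus\{p\}$ is non-collinear, so that Kelly--Moser applies to it). Everything else is bookkeeping about the pencil of lines through $p$. Note that the precise Kelly--Moser constant $\tfrac37$ is not essential here: any bound of the form $c|S|$ with $c>\tfrac13$ makes the final inequality fail, and one could even use Melchior's inequality at the cost of checking the finitely many cases with small $|S|$ by hand.
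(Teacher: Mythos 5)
Your proof is correct and is essentially the paper's own argument: both apply Kelly--Moser once to a set containing $p$ (yielding many lines through $p$ with one further point) and once to $S\setminus\{p\}$ (yielding many lines through $p$ with two further points), then derive a contradiction by counting the distinct points of $S\setminus\{p\}$ swept out by these two disjoint pencils. The only cosmetic difference is your upfront reduction to $p\in S$, where the paper instead works with $S\cup p$ and $S\setminus p$ simultaneously so that the same computation covers both cases.
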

\begin{proof}
Assume that $S$ is not near-collinear or collinear, and that every ordinary line of $S$ passes through $p$.
We note that the following proof works whether $p$ is in $S$ or not.

By the theorem of Kelly and Moser \cite{KM} mentioned in the introduction,
the set $S\cup  p$ determines at least $3|S|/7$ ordinary lines.
Each of these lines must pass through $p$, and each contains one other point.
 Let $S_1$ be the set of these other points, so $|S_1|\ge 3|S|/7$.
 Since $S$ is not near-collinear or collinear, $S\backslash p$ is not collinear, so by another application of the Kelly-Moser theorem,
 the set $S\backslash p$ determines at least $3(|S|-1)/7$ ordinary lines.
 Again each line must pass through $p$, but now each contains two points of $S\backslash p$.
Let $S_2$ be the set of these other points, so $|S_2|\ge 6(|S|-1)/7$.
The sets $S_1$ and $S_2$ are disjoint, and together they contain $(9|S|-6)/7$ points,
which is more than $|S|$ when $|S|>3$.
\end{proof}

We think it would be interesting to see a direct proof of Lemma \ref{lem:avoid}, not using the result of Kelly and Moser.
One could also ask for the minimum number of ordinary lines that avoid $p$,
or ask for an ordinary line that avoids several points.
These questions can be answered using the results of Green and Tao, but this only holds for sufficiently large point sets.
In Lemma \ref{lem:avoidset} we prove that, for sufficiently large sets, there are many ordinary lines avoiding any number of fixed points.
This lemma will be crucial in the proof of our bound on the number of ordinary conics.

We first  state the result of Green and Tao \cite{GT}.
The second part of $(b)$ is not explicitly stated in \cite{GT}, but in \cite{NZ} it is extracted from the proof in \cite{GT}.

\begin{theorem}[Green-Tao]\label{thm:greentao}
For every $K\in\mathbb{R}$ there exists an $N_K \in \mathbb{N}$ such that the
following statements hold for any non-collinear set $S$ of at least $N_K$ points in $\mathbb{R}^2$.
\begin{enumerate}
\item[$(a)$] The set $S$ determines at least $|S|/2$ ordinary lines.
\item[$(b)$] If $S$ determines fewer than $K|S|$ ordinary lines,
then either all but $O(K)$ points of $S$ lie on a line,
or all but $O(K)$ points of $S$ lie on a cubic curve.
In the second case, at least $|S|/2-O(K)$ of the ordinary lines for $S$ are tangent lines to the cubic.
\end{enumerate}
\end{theorem}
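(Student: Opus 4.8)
The plan is \emph{not} to reprove parts $(a)$ and the dichotomy in $(b)$: these are the main theorems of Green and Tao \cite{GT}, and I would take them as a black box. For orientation, their argument passes through projective duality (``$S$ has few ordinary lines'' becomes ``the dual line arrangement has few triangular faces''), then uses Euler's formula together with Melchior-type inequalities — iterated, after discarding $O(K)$ exceptional points — to force the dual arrangement into a very rigid shape, and finally transfers this rigidity back to $S$, pinning all but $O(K)$ points of $S$ onto a line or a cubic curve; in the cubic case, input from additive combinatorics identifies $S$, up to $O(K)$ points, with a coset of a finite subgroup of the group of smooth points of that cubic. Part $(a)$ is the quantitatively sharp Sylvester--Gallai bound, obtained from a case analysis over the resulting configurations; it is the structural content of $(b)$, together with this last ``coset on a cubic'' description, that we will need for Theorem~\ref{thm:main}.

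The only content of the statement that is not already explicit in \cite{GT} is the last sentence of $(b)$ — that in the cubic case at least $|S|/2-O(K)$ of the ordinary lines are \emph{tangent} to the cubic — and this is extracted from the proof in \cite{GT} in \cite{NZ}. To argue it, fix an irreducible cubic $\gamma$ carrying all but $O(K)$ points of $S$, and put the group law $\oplus$ on the smooth points of $\gamma$ with a real inflection point as identity $O$, so that three smooth points are collinear exactly when they sum to $O$, with the usual conventions for tangency and for singular points. A line meeting $S\cap\gamma$ in two points $a,b$ meets $\gamma$ again at $c=\ominus a\ominus b$, and this line is ordinary for $S$ precisely when $c\notin S$ or $c\in\{a,b\}$; the latter case is exactly the case where the line is tangent to $\gamma$ — at $a$ when $c=a$, i.e.\ $b=\ominus 2a$, and symmetrically. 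Hence every ordinary line is \emph{either} tangent to $\gamma$, \emph{or} a proper secant of $\gamma$ whose third intersection point is not in $S$, \emph{or} a line through at least one of the $O(K)$ points of $S$ lying off $\gamma$. The claim reduces to showing that the last two types together account for only $O(K)$ ordinary lines, and this is where the explicit coset structure from the structure theorem is used: on a coset on $\gamma$, all but $O(K)$ of the collinear triples of points of $S\cap\gamma$ close up inside $S$, so the associated secants are \emph{not} ordinary; counting the tangent lines then yields the $|S|/2-O(K)$ lower bound.

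The main obstacle is precisely that one cannot sidestep the Green--Tao structure theorem — without the near-coset description there is no handle on which ordinary lines are tangent. Given that description, the rest is bookkeeping: controlling the $O(K)$ points of $S$ off $\gamma$, the $O(1)$ torsion points of the group where the doubling map degenerates, and the lines through two off-curve points, and checking that each contributes at most $O(K)$ exceptional ordinary lines (the count for the lines through two off-curve points requiring a little more care). This is the extraction carried out in \cite{NZ}, which I would follow; it is elementary once the structure theorem is available, but it genuinely relies on the precise form of the extremal configurations isolated in \cite{GT}.
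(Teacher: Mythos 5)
The paper offers no proof of this statement at all: it is quoted as a black box from Green--Tao \cite{GT}, with the final tangency claim in $(b)$ attributed to the extraction carried out in \cite{NZ} --- which is precisely the division of labour you propose, so your approach matches the paper's. (One caveat about your optional sketch: your reduction to ``non-tangent ordinary lines number $O(K)$'' is not valid, since a single point missing from the coset creates roughly $|S|/2$ ordinary secants whose third intersection point is that missing point, so there can be $\Theta(K|S|)$ non-tangent ordinary lines; the extraction in \cite{NZ} instead lower-bounds the ordinary \emph{tangent} lines directly, but as you defer to \cite{NZ} for the details this does not undermine your proposal.)
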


We need the following simple lemma, stated in \cite{NZ}.

\begin{lemma}\label{lem:tangents}
Let $C$ be an algebraic curve of degree $d$ in $\mathbb{R}^2$ and $p\in\mathbb{R}^2$ a fixed point.
Then at most $d(d-1)$ lines through $p$ are tangent to $C$.
\end{lemma}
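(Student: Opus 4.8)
The plan is to use the classical correspondence between tangent lines to a plane curve from a point $p$ and the \emph{first polar} of the curve with respect to $p$, combined with B\'ezout's theorem. I would first pass to the projective closure $\overline C\subset\mathbb{CP}^2$ of $C$, cut out by a homogeneous polynomial $F(X,Y,Z)$: a real tangent line through the real point $p$ is in particular a complex tangent line, so it suffices to bound complex tangent lines. I may take $F$ squarefree, and I may also assume $C$ has no linear components, since a component line of $C$ is not a genuine tangent line, every tangent line of $C$ is a tangent line of the union of the non-linear components, and removing linear components only decreases $d$ (the bound $d(d-1)$ being non-decreasing in $d$). If nothing is left, there is nothing to prove, so assume $F$ is squarefree of degree $d\ge 2$ with no linear factors.

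Write $p=(a:b:c)$ and let the first polar be $\Delta_p(\overline C)=\{aF_X+bF_Y+cF_Z=0\}$, a curve of degree $d-1$. (It is a genuine curve: if $aF_X+bF_Y+cF_Z$ vanished identically then, after a projective change of coordinates moving $p$ to a point at infinity, $F$ would be independent of one variable, making $C$ a union of concurrent lines, which is excluded.) Two facts drive the argument. First, for a smooth point $q$ of $\overline C$ the tangent line $F_X(q)X+F_Y(q)Y+F_Z(q)Z=0$ passes through $p$ if and only if $q\in\Delta_p(\overline C)$; hence every tangent line of $C$ through $p$ touches $\overline C$ at a point of $\overline C\cap\Delta_p(\overline C)$, and distinct tangent lines give distinct such points because a smooth point determines its tangent line uniquely. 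Second, $\overline C$ and $\Delta_p(\overline C)$ have no common component: a common component $D$ would, by the first fact applied along $D$, have all of its tangent lines through $p$, so a local parametrization $\gamma$ of $D$ satisfies $\gamma'\parallel p-\gamma$, forcing $\gamma$ (hence $D$) to be a line through $p$ — contrary to the absence of linear components.

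Given these two facts, B\'ezout's theorem gives $|\overline C\cap\Delta_p(\overline C)|\le d(d-1)$, and the injection from the first fact then bounds the number of tangent lines of $C$ through $p$ by $d(d-1)$.

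I expect the only real content to be the second fact — in particular the sub-step that a reduced irreducible curve all of whose tangent lines pass through one point is a line, which is where characteristic zero is genuinely used; everything else is linear algebra and B\'ezout. The remaining care is bookkeeping: fixing a definition of ``tangent line'' under which component lines and the degenerate cases ($d\le 1$, or $C$ a union of lines) are treated uniformly, and checking that passing to the complexification and to the projective closure neither creates nor destroys tangent lines through $p$.
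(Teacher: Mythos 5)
The paper does not actually prove this lemma: it is quoted from \cite{NZ} without proof. Your argument is the standard (and, as far as I can tell, the intended) one: pass to the complex projective closure, intersect $\overline C$ with the first polar $\Delta_p(\overline C)$ of degree $d-1$, verify that the polar is a genuine curve with no component in common with $\overline C$ (the ``strange curve'' step, which is where characteristic zero enters), and apply B\'ezout; the injection from tangent lines through $p$ to their smooth points of tangency in $\overline C\cap\Delta_p(\overline C)$ then gives the bound $d(d-1)$. The write-up is correct, and the two points you flag as carrying the real content --- that a reduced irreducible curve all of whose tangent lines pass through one point is a line through that point, and the bookkeeping for linear components and degenerate cases --- are indeed the only non-formal steps. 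The one convention worth making explicit is that your count covers lines tangent to $C$ at a \emph{smooth} point (your injection picks a smooth touch point on each line); this is the notion of tangency used in the application (the tangent lines to the cubic in Lemma \ref{lem:avoidset} are tangents at smooth points arising from the group law), so nothing is lost.
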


We are now ready to prove that for sufficiently large sets $S$ there are ordinary lines for $S$ avoiding any point set $T$.
Note that the points of $T$ may or may not be in $S$.
It is necessary to exclude the case that $S\backslash T$ is collinear, since in that case there would be no ordinary lines avoiding $T$.

\begin{lemma}\label{lem:avoidset}
For every $k$ there exist $L_k, M_k\in \mathbb{N}$ such that the following holds.
Given a set $S\subset \R^2$ of at least $L_k$ points, and a set $T\subset\R^2$ of $k$ points such that $S\backslash T$ is not collinear,
there are at least $|S|/2-M_k$ ordinary lines for $S$ that avoid $T$.
\end{lemma}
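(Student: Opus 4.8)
The plan is to apply part $(a)$ of the Green--Tao theorem to get a baseline count of $|S|/2$ ordinary lines for $S$, and then bound how many of these can pass through the $k$ points of $T$. Each point $t\in T$ lies on at most... no, that is not bounded — a point can lie on many ordinary lines. So instead I would split into cases according to the structure of $S$, using part $(b)$ of the Green--Tao theorem applied with a suitable constant $K$ (say $K = 1$, or more carefully a constant depending on $k$ through the error terms, but $K$ fixed independent of $k$ should suffice since we only need to separate the ``few ordinary lines'' regime from the ``many ordinary lines'' regime).

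\smallskip

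First, fix $K$ large enough (a universal constant), and let $N_K$ be as in Theorem \ref{thm:greentao}. Take $L_k \ge N_K$, to be enlarged later. If $S$ has at least $N_K$ points and determines at least $K|S|$ ordinary lines, then since each point of $T$ lies on at most $|S|$ lines determined by $S$ (one through each other point of $S$, roughly), at most $k|S|$ of the ordinary lines meet $T$; but that is too weak. The right move: an ordinary line through a fixed point $t$ is determined by its \emph{second} point, so there are at most $|S|$ ordinary lines through any single $t$, hence at most... still $k|S|$. So brute force fails, and we genuinely need the structure theorem.

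\smallskip

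\textbf{Case 1: $S$ determines at least $K|S|$ ordinary lines, with $K$ chosen $\ge$ some threshold.} Actually here is the clean approach. Apply Theorem \ref{thm:greentao}$(b)$ with the constant $K=1$. Either $S$ has at least $|S|$ ordinary lines, or all but $O(1)$ points of $S$ lie on a line or a cubic.

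\emph{Subcase 1a: all but $O(1)$ points lie on a line $\ell$.} Then $S \setminus T$ is not collinear by hypothesis, so there is at least one point of $S$ off $\ell$ and not in $T$; combined with the $O(1)$ exceptional points, one checks directly (as in Lemma \ref{lem:avoid}) that there are enough ordinary lines avoiding $T$ — here essentially all ordinary lines are ``two points of the $O(1)$ set'' or ``one exceptional point plus one point of $\ell$'', and by a counting argument at least $|S| - O(k)$ of the latter type avoid $T$, since each $t\in T$ kills at most one such line per exceptional point, i.e.\ $O(k)$ total. Wait — if $t \in \ell$, a line through $t$ and an exceptional point is one line, $O(1)$ of them; if $t\notin \ell$, it could lie on many. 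But a line through $t\notin\ell$ hitting $\ell$ in a point of $S$ is ordinary only if it contains exactly two points of $S$; there are at most $|S|$ such lines through $t$ but we only care that they are ordinary lines, and we are trying to \emph{subtract} them, so we need an upper bound on ordinary lines through $t$. An ordinary line through $t$ has its second point determined, so at most $|S|$ — useless again.

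So Subcase 1a needs more care: when all but $O(1)$ points lie on $\ell$, the ordinary lines for $S$ are of three types: (i) the line $\ell$ itself is not ordinary (it has $\ge |S|-O(1) \ge 3$ points); (ii) lines through two exceptional points — there are $O(1)$ of these; (iii) lines through one exceptional point $e$ and one point of $S\cap\ell$ — for fixed $e$ this is ordinary iff that point of $\ell$ is the unique one collinear with $e$ and... no, any line through $e$ meets $\ell$ in one point, and contains exactly two points of $S$ unless two exceptional points are collinear with it. So for each exceptional point $e$, all but $O(1)$ points of $S\cap\ell$ give an ordinary line through $e$, yielding $\ge |S| - O(1)$ ordinary lines per exceptional point. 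Since $S\setminus T$ is non-collinear, there is an exceptional point $e \notin T$ with $e\notin\ell$. The ordinary lines through this $e$ and a point $s\in S\cap\ell$: such a line avoids $T$ provided $s\notin T$ and the line doesn't pass through any other point of $T$. But the line through $e$ and $s$ could pass through some $t\in T\cap\ell$ — no wait, $t\in\ell$ and $s\in\ell$ distinct means the line through $e,s$ meets $\ell$ only at $s\ne t$. It could pass through $t\notin\ell$: then $e,s,t$ collinear, so $t$ is on line $es$; for fixed $e$, each $t\notin\ell$ lies on one line through $e$, removing $O(1)$ choices of $s$ per such $t$, i.e.\ $O(k)$ total. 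And $|T\cap\ell| \le k$ removes $\le k$ choices of $s$. So $\ge |S| - O(k)$ ordinary lines through $e$ avoid $T$. This gives the bound (in fact better than $|S|/2$) in Subcase 1a, absorbing constants into $M_k$.

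\emph{Subcase 1b: all but $O(1)$ points of $S$ lie on a cubic curve $C$, and $|S|/2 - O(1)$ of the ordinary lines are tangent to $C$.} For each $t\in T$, by Lemma \ref{lem:tangents} at most $3\cdot 2 = 6$ lines through $t$ are tangent to $C$, so at most $6$ of these tangent ordinary lines pass through $t$, hence at most $6k$ pass through $T$. Therefore at least $|S|/2 - O(1) - 6k$ ordinary lines avoid $T$. Set $M_k$ to absorb the $O(1) + 6k$.

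\emph{Case 2 (the leftover of applying $(b)$ with $K=1$): $S$ determines at least $|S|$ ordinary lines but neither structural conclusion is needed.} Hmm, this isn't a clean dichotomy. Let me restructure: apply $(b)$ with a threshold $K$. If $S$ has $< K|S|$ ordinary lines we are in Subcase 1a or 1b above. If $S$ has $\ge K|S|$ ordinary lines, I want to say many avoid $T$ — but as noted, a single point of $T$ can lie on up to $|S|$ ordinary lines, so $k$ points could conceivably cover all of them if $|S| \le k|S|$, which is always true. So Case 2 does NOT follow from a trivial count, and this is exactly the obstacle.

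\smallskip

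\textbf{Resolving Case 2 — and the main obstacle.} The fix is to iterate / be cleverer. When $S$ has $\ge K|S|$ ordinary lines, consider the at most $k$ points of $T$. An ordinary line through $t\in T$ is one where $t$ is on it and it has two points of $S$ (if $t\notin S$) or it has one other point (if $t\in S$, it being an ordinary line for $S$). In the worst case all ordinary lines pass through a single $t$. But then apply Green--Tao $(a)$ to $S \cup T$ or use the \emph{structural} statement: if essentially all $\ge K|S|$ ordinary lines of $S$ pass through the $k$ points of $T$, one of the $t\in T$ has $\ge (K/k)|S|$ ordinary lines through it; one then runs the projection/Kelly-Moser argument of Lemma \ref{lem:avoid} to derive a contradiction with non-collinearity of $S\setminus T$, for $|S|$ large. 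Concretely: mimic Lemma \ref{lem:avoid}'s proof but track all $k$ points of $T$ at once — apply the Kelly--Moser bound to $S$, to $S\setminus\{t_1\}$, to $S\setminus\{t_1,t_2\}$, ..., each time producing a large set of ``second points'' on ordinary lines avoiding more of $T$, and show these sets are too large to all be absorbed. This is the step I expect to be the main obstacle: getting the bookkeeping right so that the constant loss is only an additive $M_k$ depending on $k$ (not multiplicative in $|S|$), while still reaching exactly $|S|/2$ rather than $c|S|$ for some smaller $c$ — for the clean constant $1/2$ one really wants to feed the final configuration back into Green--Tao $(a)$ (which gives $|S|/2$) rather than Kelly--Moser (which only gives $3|S|/7$), and arrange that the large set $S \setminus (\text{stuff})$ to which $(a)$ is applied is still non-collinear and still has $\ge N_K$ points. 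I would set $L_k$ large enough that all the ``$O(K)$'' and ``$O(1)$'' error terms, which here become ``$O(k)$'' after summing over $T$, are comfortably dominated, and define $M_k$ as the resulting cumulative additive constant.
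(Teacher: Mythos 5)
Your subcases 1a and 1b match the paper's treatment of the ``few ordinary lines'' regime almost exactly: non-collinearity of $S\setminus T$ forces an exceptional point of $S$ off the line and outside $T$, giving $|S|-O(k)$ ordinary lines through it that avoid $T$; and in the cubic case Lemma \ref{lem:tangents} shows each point of $T$ kills at most $6$ of the ordinary tangent lines, hence $6k$ in total. The genuine gap is your Case 2, which you correctly flag as the main obstacle but do not resolve: the iterated Kelly--Moser scheme is only sketched, and it is not clear it would recover the constant $\tfrac12$ rather than $\tfrac37$. The missing idea is simply to let the Green--Tao threshold depend on $k$: apply Theorem \ref{thm:greentao}$(b)$ with $K=k+1$ and set $L_k=N_{k+1}$. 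Then in the ``many ordinary lines'' case $S$ determines at least $(k+1)|S|$ ordinary lines, and the union bound you computed and then dismissed as ``too weak'' --- each point of $T$ lies on at most $|S|$ lines determined by $S$, so at most $k|S|$ ordinary lines meet $T$ --- already leaves at least $|S|$ ordinary lines avoiding $T$, comfortably more than $|S|/2$. You considered letting $K$ depend on $k$ but only ``through the error terms''; in fact the dependence is needed precisely so that the trivial count closes the many-lines regime. With that one change Case 2 disappears and your argument becomes the paper's proof.
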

\begin{proof}
Set $L_k = N_{k+1}$ (a constant from Theorem \ref{thm:greentao}) and assume that $|S|\geq L_k$.
Suppose that $S$ determines at least $(k+1)|S|$ ordinary lines.
Since each of the $k$ points of $T$ is hit by at most $|S|$ of the lines determined by $S$, at most $k|S|$ of the ordinary lines hit $T$.
Hence there are at least $|S|$ ordinary lines that avoid $T$.

Suppose that $S$ determines fewer than $(k+1)|S|$ ordinary lines.
Then, by Theorem \ref{thm:greentao}$(b)$, all but $O(k)$ points of $S$ lie on a line $L$ or on a cubic curve $C$.
In the first case, 
there is a point $p\in S$ that is not on $L$ and not in $T$.
Then there are at least $|S| - O(k)$ ordinary lines through $p$ that avoid $T$.
In the second case,
there are at least $|S|/2-O(k)$ ordinary lines for $S$ that are tangent lines to $C$.
If $q$ is a point in $T$,
then by Lemma \ref{lem:tangents}, at most six tangent lines of $C$ hit $q$.
Altogether, at most $6k$ ordinary tangent lines for $S$ hit $T$, so there are at least $|S|/2-O(k)$ ordinary lines for $S$ that avoid $T$.
\end{proof}

\section{Conics and the Veronese map}

Let us first recall that a \emph{conic} is the zero set in $\mathbb{R}^2$ of a polynomial of degree $2$.
Note that under this definition, a conic can be a union of two distinct lines (for instance $xy=0$), and this type of conic will play a role in our proof.
A conic can also be an empty set ($x^2+y^2 = -1$) or a double line ($x^2 = 0$), but these two types will not play a serious role in our proof.
A set $S\subset\mathbb{R}^2$ is \emph{co-conic} if it is contained in a conic.
Given a finite set $S\subset \R^2$, a conic is ordinary for $S$ if it contains exactly five points of $S$, and is the only conic containing these five points.
A double line (let alone an empty set) cannot be ordinary, since if it contains five points of $S$, there are infinitely many other conics containing these five points.

 We will use the \emph{Veronese map} $V:\mathbb{R}^2\to\mathbb{R}^5$, defined by
 \[V:(x,y)\mapsto (x,y,x^2,xy,y^2).\]
This map defines a bijection between conics in $\mathbb{R}^2$ and hyperplanes in $\mathbb{R}^5$ in the following sense:
A set $S\subset \R^2$ lies on the conic $ a_0 + a_1 x + a_2 y + a_3 x^2 + a_4 xy + a_5 y^2=0$
if and only if $V(S)\subset \R^5$ lies on the hyperplane $a_0+a_1z_1 +a_2z_2 +a_3z_3+a_4z_4+a_5z_5=0$.
In what follows, we prove a series of properties of this mapping; all of these were also proved in some form in \cite{WW}.

We use the term \emph{$k$-flat} to refer to an affine subspace of dimension $k$.
For $S\subset \R^d$, we write $\overline{S}$ for the smallest affine subspace containing $S$,
and given points $p_1,\ldots, p_k$, we write $\overline{p_1\cdots p_k}$ for the smallest affine subspace containing them.
For instance, $\overline{pq}$ is the line spanned by $p$ and $q$.

A $k$-flat is \emph{ordinary} for a set $S$ if it contains exactly $k+1$ points of $S$ and is the unique $k$-flat through these points.\footnote{This is not the standard definition of ordinary $k$-flat; see for instance Motzkin \cite{Mo}.}
With this definition, we have the following key fact:
\begin{center}
\emph{A conic is ordinary for $S\subset\R^2$ if and only if the corresponding hyperplane is ordinary for $V(S)\subset \R^5$.}
\end{center}
In the proofs of our main theorems, our goal will be to find ordinary hyperplanes for $V(S)$.

Note that a double line cannot be an ordinary conic,
and the corresponding hyperplane is not ordinary, because the intersection of $V(\R^2)$ with this hyperplane is contained in other hyperplanes; see Lemma \ref{lem:twoflats} below.
For instance, the double line defined by $x^2 = 0$ corresponds to the hyperplane $z_3=0$, and the points of $V(\R^2)$ that satisfy $z_3=0$ also satisfy $z_1=0$.

\begin{lemma}\label{lem:oneflats}
No three points of $V(\R^2)$ lie on the same line.
\end{lemma}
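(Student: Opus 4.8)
The plan is to argue by contradiction. Suppose three \emph{distinct} points $V(p_1),V(p_2),V(p_3)$ lie on a common line $L\subset\R^5$, where $p_i=(a_i,b_i)$. The first move is to push everything down to $\R^2$. Projecting onto the first two coordinates is an affine map sending $V(a,b)$ to $(a,b)$, and affine maps carry a line to a line or a point; since the $V(p_i)$ are distinct the $p_i$ are distinct, so their images cannot all coincide, and hence $p_1,p_2,p_3$ lie on a genuine line $\ell\subset\R^2$. It therefore suffices to prove that $V(\ell)$ never contains three collinear points, for any line $\ell$.

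Next I would parametrize $\ell$ as $t\mapsto(x_0+tu,\,y_0+tv)$ with $(u,v)\neq(0,0)$, writing $p_i=(x_0+t_iu,\,y_0+t_iv)$ for distinct reals $t_1,t_2,t_3$. Expanding the Veronese map coordinatewise gives
\[
V(x_0+tu,\,y_0+tv)=A+tB+t^2C,
\]
where $A=V(x_0,y_0)$, the vector $B$ has first two coordinates $(u,v)$, and $C=(0,0,u^2,uv,v^2)$. Since $(u,v)\neq 0$ we have $C\neq 0$, and comparing first coordinates shows $B$ and $C$ are linearly independent (if $\lambda B+\mu C=0$ then $\lambda(u,v)=0$ forces $\lambda=0$, and then $\mu C=0$ forces $\mu=0$). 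Thus $V(\ell)$ is a nondegenerate parabola spanning a $2$-flat.

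Finally I would contradict the assumed collinearity. One clean way: pick linear functionals $f,g$ on $\R^5$ with $f(B)=g(C)=1$ and $f(C)=g(B)=0$ (possible as $B,C$ are independent); then $z\mapsto\bigl(f(z-A),\,g(z-A)\bigr)$ is an affine map carrying $V(p_i)$ to $(t_i,t_i^2)$, so collinearity of the $V(p_i)$ would force the three points $(t_i,t_i^2)\in\R^2$ to be collinear — but their collinearity determinant is the Vandermonde $(t_2-t_1)(t_3-t_1)(t_3-t_2)\neq 0$, a contradiction. (Alternatively one computes directly $V(p_j)-V(p_1)=(t_j-t_1)\bigl(B+(t_1+t_j)C\bigr)$ and checks these vectors cannot be parallel for $j=2,3$, again using the independence of $B$ and $C$.)

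There is no serious obstacle in this argument; the only points that need care are the reduction step — where one must use that $V$ is injective so that the projected points remain distinct and actually span a line — and the verification that $B$ and $C$ are independent, i.e.\ that the image of a line under $V$ is always a nondegenerate parabola rather than a line, which is exactly where the hypothesis $(u,v)\neq 0$ is used.
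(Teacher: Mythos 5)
Your proof is correct, but it takes a genuinely different route from the paper's. The paper works entirely upstairs in $\R^5$: it observes that $V(\R^2)$ is cut out by the three quadratic equations $z_3=z_1^2$, $z_4=z_1z_2$, $z_5=z_2^2$, that a line meets each such quadric in at most two points unless it lies entirely inside it, and then rules out the remaining possibility by showing $V(\R^2)$ contains no line (a linear parametrization cannot satisfy $p_3=p_1^2$ with $p_1$ nonconstant). You instead push the configuration down to $\R^2$ via the coordinate projection, reduce to showing that the Veronese image of a planar line contains no three collinear points, and then compute $V(x_0+tu,\,y_0+tv)=A+tB+t^2C$ with $B,C$ independent, so that an affine change of coordinates turns the three points into $(t_i,t_i^2)$ and the Vandermonde determinant finishes the job. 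Both arguments are elementary and of comparable length; the paper's is marginally more economical, while yours yields the extra structural fact that $V(\ell)$ is a nondegenerate parabola spanning a $2$-flat, which is essentially one direction of the paper's Lemma \ref{lem:twoflats} and explains \emph{why} no three image points can be collinear rather than merely verifying it. Your reduction step (distinct points stay distinct under the projection, so the image of the line is a line and not a point) and the independence check for $B$ and $C$ are exactly the places needing care, and you handle both correctly.
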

\begin{proof}
A point $(z_1,z_2,z_3,z_4,z_5)\in \mathbb{R}^5$ belongs to $V(\mathbb{R}^2)$ if and only if it satisfies
\[z_3=z_1^2,~~~ z_4=z_1z_2,~~~ z_5=z_2^2.\]
For each of these equations, a line in $\R^5$ either has at most two solutions, or all its points satisfy the equation.
Thus we are done unless the line is contained in $V(\R^2)$.

We show that $V(\R^2)$ contains no line, which finishes the proof.
Consider a parametrization $p(t)=(p_1(t),\dots ,p_5(t))$ of the line, with all $p_i$ linear, and suppose it satisfies all three equations for all $t$.
We cannot have both $p_1$ and $ p_2$ constant, since then the other three coordinates would also be constant.
But if, say, $p_1$ is not constant, then $p_3(t)=p_1(t)^2$, which is not possible.
\end{proof}

\begin{lemma}\label{lem:twoflats}
Let $S\subset\mathbb{R}^2$ with $|S|\ge 4$.
Then $V(S)$ is contained in a $2$-flat if and only if $S$ is contained in a line.
\end{lemma}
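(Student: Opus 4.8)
The plan is to prove the two implications separately. The implication ``$S$ on a line $\Rightarrow$ $V(S)$ in a $2$-flat'' is a one-line computation, while ``$S$ not on a line $\Rightarrow$ $V(S)$ not in a $2$-flat'' carries all the content; for the latter the hypothesis $|S|\ge 4$ is exactly what is needed, since three non-collinear points already map under $V$ into a $2$-flat.

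For the easy direction I would parametrize the line containing $S$ affinely as $(x,y)=(a_1+b_1t,\,a_2+b_2t)$ with $(b_1,b_2)\neq(0,0)$. Substituting into $V$, each of the five coordinates of $V(a_1+b_1t,\,a_2+b_2t)$ is a polynomial of degree at most $2$ in $t$, so the restriction of $V$ to the line has the form $\mathbf{u}_0 + t\,\mathbf{u}_1 + t^2\,\mathbf{u}_2$ for fixed vectors $\mathbf{u}_0,\mathbf{u}_1,\mathbf{u}_2\in\mathbb{R}^5$. Hence the image of the whole line, and in particular $V(S)$, lies in the $2$-flat $\mathbf{u}_0 + \mathrm{span}(\mathbf{u}_1,\mathbf{u}_2)$.

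For the converse, suppose $S$ is not contained in a line. Since $S$ is not collinear it contains three non-collinear points $p_1,p_2,p_3$, and since $|S|\ge 4$ there is a fourth point $p_4\in S\setminus\{p_1,p_2,p_3\}$. By Lemma~\ref{lem:oneflats} the points $V(p_1),V(p_2),V(p_3)$ are not collinear, hence affinely independent, so $F_0:=\overline{V(p_1)V(p_2)V(p_3)}$ is a $2$-flat. If $V(S)$ lay in some $2$-flat $G$, then $G$ would contain $V(p_1),V(p_2),V(p_3)$, hence $F_0\subseteq G$, and as both are $2$-flats $G=F_0$; in particular $V(p_4)\in F_0$. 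I would then get a contradiction by showing $V(p_4)\notin F_0$. The point is that $F_0$ is the intersection of all hyperplanes through $V(p_1),V(p_2),V(p_3)$, and by the conic--hyperplane correspondence of Section~\ref{sec:ordinarylines} (recalled just before this lemma) these hyperplanes correspond exactly to the conics through $p_1,p_2,p_3$; so $V(p_4)\in F_0$ would force $p_4$ to lie on \emph{every} conic through $p_1,p_2,p_3$. It therefore suffices to exhibit one conic through $p_1,p_2,p_3$ missing $p_4$.

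The construction of that conic is the main (and only) obstacle, and it is handled by taking a reducible conic. Since the three lines $\overline{p_1p_2},\overline{p_1p_3},\overline{p_2p_3}$ meet pairwise only at $p_1,p_2,p_3$, which are distinct from $p_4$, the point $p_4$ lies on at most one of them; after relabeling assume $p_4\notin\overline{p_1p_2}$. Choosing any line $M$ through $p_3$ with $p_4\notin M$ (there are infinitely many lines through $p_3$ and only one contains $p_4$), the product of the linear forms defining $\overline{p_1p_2}$ and $M$ gives a conic $\overline{p_1p_2}\cup M$ — a genuine union of two distinct lines, since $p_3\notin\overline{p_1p_2}$ — passing through $p_1,p_2,p_3$ but not through $p_4$. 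This is the desired contradiction, so $V(S)$ is not contained in any $2$-flat. The care required is minimal: only the small case distinction for where $p_4$ sits relative to the three spanned lines, plus the bookkeeping that translates ``$V(S)$ in a $2$-flat'' into a statement about conics through $p_1,p_2,p_3$ via the correspondence.
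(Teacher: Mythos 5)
Your proof is correct, and while the easy direction (parametrizing the line and noting that $V$ restricted to it is quadratic in the parameter, so its image sits in $\mathbf{u}_0+\mathrm{span}(\mathbf{u}_1,\mathbf{u}_2)$) is essentially the paper's computation in parametric rather than equational form, your argument for the substantial direction takes a genuinely different route. The paper picks four points $S'\subset S$ lying on a union of two distinct lines $\ell_1\cup\ell_2$, adds a fifth point $p$ on one of the lines so that the five points \emph{uniquely} determine the conic $\ell_1\cup\ell_2$, and then uses the conic--hyperplane dictionary to conclude that $V(S'\cup p)$ spans a $4$-flat, hence $V(S')$ cannot lie in a $2$-flat. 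You instead fix three non-collinear points plus a fourth, use Lemma \ref{lem:oneflats} to see that $\overline{V(p_1)V(p_2)V(p_3)}$ is a $2$-flat, identify that $2$-flat with the intersection of all hyperplanes through the three images, translate membership of $V(p_4)$ in it into ``$p_4$ lies on every conic through $p_1,p_2,p_3$,'' and refute that by exhibiting one separating reducible conic $\overline{p_1p_2}\cup M$. Both proofs reduce affine (in)dependence of Veronese images to statements about conics and both manufacture the needed conics as unions of lines; yours trades the paper's ``five points uniquely determining a conic'' for the arguably cleaner ``one conic through three points missing a fourth,'' at the mild cost of invoking the fact that a proper affine subspace is the intersection of the hyperplanes containing it (and of checking that the separating polynomial has a nonzero quadratic part, so that it really corresponds to a hyperplane in $\R^5$, which your union of two honest lines guarantees). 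The small case analysis --- relabeling so that $p_4\notin\overline{p_1p_2}$, then choosing $M$ through $p_3$ avoiding $p_4$ --- is complete and correct.
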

\begin{proof}
Suppose that $S$ is contained in the line defined by $y =ax+b$ (vertical lines can be handled similarly).
Then the points of $V(S)$ have the form $(x,ax+b, x^2, ax^2+bx, a^2x^2+2abx+b^2)$.
Thus they satisfy the three linear equations $z_2=az_1+b$, $z_4=az_3+bz_1$, and $z_5=a^2z_3+2abz_1+b^2$.
These three linearly independent equations define a $2$-flat that contains $V(S)$.

Suppose that $S$ is not collinear.
Then there is a subset $S'$ of four points of $S$ that determine two distinct lines $\ell_1$ and $\ell_2$.
We can pick a point $p$ on one of the lines such that the set $S'\cup p$ consists of five points that uniquely determine the conic $\ell_1\cup\ell_2$.
This implies that $V(S'\cup p)$ determines a unique hyperplane, and thus does not lie in a $3$-flat.
Then $V(S')$ does not lie in a $2$-flat.
\end{proof}

\begin{corollary}\label{cor:twoflats}
If the points $p,q,r\in S\subset {\mathbb{R}^2}$ are not collinear, then $\overline{V(p)V(q)V(r)} $ is an ordinary $2$-flat for $V(S)$.
\end{corollary}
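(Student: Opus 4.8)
The statement combines the two preceding lemmas directly. First I would check that $\overline{V(p)V(q)V(r)}$ is genuinely a $2$-flat: since $p,q,r$ are distinct and the three points $V(p),V(q),V(r)$ lie on $V(\R^2)$, Lemma \ref{lem:oneflats} tells us they are not collinear, hence affinely independent. Three affinely independent points span a unique $2$-flat, so the uniqueness clause in the definition of ``ordinary'' is automatic. It remains only to show that this $2$-flat contains no point of $V(S)$ other than $V(p),V(q),V(r)$.

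For that, suppose for contradiction that some $s\in S\setminus\{p,q,r\}$ has $V(s)\in\overline{V(p)V(q)V(r)}$. Then the four points $V(p),V(q),V(r),V(s)$ all lie in a common $2$-flat, namely $\overline{V(p)V(q)V(r)}$ itself. Applying Lemma \ref{lem:twoflats} to the four-element set $S'=\{p,q,r,s\}$ (note $|S'|=4$, which is exactly the hypothesis needed there), we conclude that $S'$ is contained in a line. In particular $p,q,r$ are collinear, contradicting the hypothesis. Hence $\overline{V(p)V(q)V(r)}$ meets $V(S)$ in exactly the three points $V(p),V(q),V(r)$, and it is therefore an ordinary $2$-flat for $V(S)$.

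There is no real obstacle here; the only point requiring a little care is to invoke Lemma \ref{lem:twoflats} on the four-point subset $\{p,q,r,s\}$ rather than on all of $S$, so that the size hypothesis $|S'|\ge 4$ is satisfied and the conclusion ``$S'$ lies on a line'' immediately contradicts non-collinearity of $p,q,r$.
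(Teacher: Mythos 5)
Your proof is correct and matches the paper's argument essentially verbatim: Lemma \ref{lem:oneflats} gives that $V(p),V(q),V(r)$ span a genuine $2$-flat, and Lemma \ref{lem:twoflats} applied to the four-point set $\{p,q,r,s\}$ yields the contradiction with non-collinearity of $p,q,r$. No differences worth noting.
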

\begin{proof}
By Lemma \ref{lem:oneflats}, the points $V(p), V(q), V(r)$ are not collinear, so $\overline{V(p)V(q)V(r)} $ is indeed a $2$-flat.
Suppose that $\overline{V(p)V(q)V(r)} $ is not ordinary, so it contains another point $V(s)$ from $V(S)$.
Then, by applying Lemma \ref{lem:twoflats} to the four points $p,q,r,s$, the points $p,q,r\in S$ are collinear, contrary to the assumption.
\end{proof}

The next lemma states that the near-collinear sets are the only large sets that determine more than one conic.

\begin{lemma}\label{lem:threeflats}
Let $S\subset\mathbb{R}^2$ be a non-collinear set with  $|S|\ge 5$.
Then $V(S)$ is contained in a $3$-flat if and only if $S$ is \emph{near-collinear}.
\end{lemma}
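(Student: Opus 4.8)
The plan is to prove the two directions separately, reusing the machinery already built up for smaller flats. For the ``only if'' direction, suppose $S$ is not near-collinear and not collinear; I want to show $V(S)$ is not contained in a $3$-flat, i.e., $V(S)$ spans a $4$-flat (so that it determines a unique hyperplane, hence a unique conic). Since $S$ is not near-collinear, one can find four points of $S$, no three collinear — indeed, take any three non-collinear points $p,q,r$; if every other point lay on one of the lines $\overline{pq},\overline{pr},\overline{qr}$ we would still not yet be near-collinear, but by iterating (or by a short combinatorial argument using that $S$ is not covered by two lines) one extracts a fourth point $s$ making $p,q,r,s$ in ``general position.'' Actually it is cleaner to argue: since $S$ is not near-collinear, for any line $\ell$ there are at least two points of $S$ off $\ell$; pick a line $\ell_1$ through two points of $S$ containing a maximal number of points of $S$, then pick two points $u,v\notin\ell_1$, and the line $\overline{uv}$ together with a third point off it gives what we need. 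Having four points $p,q,r,s$ with no three collinear, I then want a fifth point $t\in S$ such that $p,q,r,s,t$ determine a unique conic — equivalently such that $V(p),\dots,V(t)$ span a $4$-flat. By Corollary \ref{cor:twoflats}, $V(p),V(q),V(r)$ span an ordinary $2$-flat, and by Lemma \ref{lem:twoflats} applied to $\{p,q,r,s\}$ (non-collinear), $V(p),V(q),V(r),V(s)$ span a $3$-flat $F$. Now I need some $t\in S$ with $V(t)\notin F$: if every point of $S$ mapped into $F$, then $V(S)\subseteq F$, and I must derive a contradiction with $S$ not being near-collinear. This is exactly the content of the ``if'' direction contrapositive, so I will prove that direction first and then use it.

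For the ``if'' direction, suppose $S$ is near-collinear: all points but one, say $S\setminus\{w\}$, lie on a line $\ell$. Then $S$ lies on the conic $\ell\cup\ell'$ where $\ell'$ is any line through $w$; in fact $S$ lies on infinitely many such conics. Translating via the Veronese map: by Lemma \ref{lem:twoflats}, $V(S\setminus\{w\})$ lies in a $2$-flat $P$ (since $S\setminus\{w\}$ is collinear and has at least four points — here I use $|S|\ge 5$). Adjoining the single point $V(w)$ enlarges $P$ to at most a $3$-flat $\overline{P\cup\{V(w)\}}$, which contains all of $V(S)$. That proves $V(S)$ lies in a $3$-flat. (The degenerate sub-case where $S\setminus\{w\}$ has only three points is excluded by $|S|\ge 5$, since then $S$ would have four points and near-collinear would force $|S|\le 4$; if $|S|=5$ and four points are on $\ell$ we still have $|S\setminus\{w\}|=4\ge 4$, so Lemma \ref{lem:twoflats} applies.)

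With the ``if'' direction in hand, I return to finish ``only if.'' I have $p,q,r,s\in S$, no three collinear, spanning a $3$-flat $F=\overline{V(p)V(q)V(r)V(s)}$ after applying $V$. If $V(S)\subseteq F$, i.e. $V(S)$ lies in a $3$-flat, I want to conclude $S$ is near-collinear, contradicting our hypothesis. The cleanest route: show directly that if $V(S)$ lies in a $3$-flat then $S$ is near-collinear, which is precisely the contrapositive of what remains. So really the logical structure should be: (1) prove near-collinear $\Rightarrow$ $V(S)$ in a $3$-flat (done above); (2) prove $V(S)$ in a $3$-flat $\Rightarrow$ $S$ near-collinear. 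For (2), suppose $V(S)$ lies in a $3$-flat. If $S$ is contained in a line, that contradicts non-collinearity of $S$, so $S$ is non-collinear; pick non-collinear $p,q,r\in S$. Every other point $s\in S$ gives, with $p,q,r$, four points whose image spans at most a $3$-flat; I claim each such $s$ must lie on one of the three lines $\overline{pq},\overline{pr},\overline{qr}$ — no wait, that is not quite forced. Instead, here is the key geometric input I expect to be the main obstacle: I need to understand which configurations of five points in $\R^2$ fail to determine a unique conic, and show that the only non-collinear such configurations are near-collinear ones. The standard fact (which I would prove via the hyperplane–conic dictionary and a careful case analysis, or cite from classical algebraic geometry) is that five points determine a unique conic unless four of them are collinear, or all five are collinear. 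Granting this, if $V(S)$ lies in a $3$-flat then every five-element subset of $S$ lies on a common conic but does not determine it, so every five-element subset has four collinear points; a short combinatorial argument (a ``sunflower''-type / majority argument on which four-subsets are collinear across overlapping five-subsets) then forces all but one point of $S$ onto a single line, i.e. $S$ is near-collinear. The delicate part is making this combinatorial step airtight — in particular handling small cases and ensuring the ``large collinear subset'' is genuinely of size $|S|-1$ and not just $|S|-2$, which is where the hypothesis that $S$ is not contained in two lines would be invoked if needed. I expect the write-up to route around the full five-point classification by instead arguing inductively: using Lemma \ref{lem:twoflats} and Corollary \ref{cor:twoflats} to control $2$-flats, then showing that a $3$-flat containing $V(S)$ together with non-collinearity of $S$ pins down a line containing $|S|-1$ points.
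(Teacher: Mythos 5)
The easy direction of your proposal (near-collinear implies $V(S)$ lies in a $3$-flat) is correct and coincides with the paper's: apply Lemma \ref{lem:twoflats} to the collinear set $S\backslash\{w\}$, which has at least four points, and adjoin $V(w)$.

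The converse direction is where there is a genuine gap: you reduce it to two claims that you state but never prove. The first is the classification that five points fail to determine a unique conic only when four of them are collinear; the second is the combinatorial step that if every five-element subset of $S$ contains four collinear points, then all but one point of $S$ lie on a single line. Both claims are true, but neither is established in your write-up --- you explicitly flag the second as ``delicate,'' and you reach for a hypothesis (``$S$ is not contained in two lines'') that the lemma does not supply, which signals that you were not sure the argument closes. Moreover, proving the first claim already requires exactly the B\'ezout argument that the paper applies once, globally: a $3$-flat containing $V(S)$ lies in two independent hyperplanes, which pull back to two distinct conics $C_1,C_2\supseteq S$; since $|S|\geq 5$, B\'ezout forces $C_1$ and $C_2$ to share a line $\ell$, so $C_1\cap C_2=\ell\cup(\ell_1\cap\ell_2)$ is a line plus at most one further point, and non-collinearity of $S$ gives near-collinearity immediately. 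Your route instead applies this reasoning to each five-point subset and then still needs the omitted majority argument. That argument can in fact be completed in a few lines (once one five-subset yields $a,b,c,d\in\ell$ and $e\notin\ell$, applying the hypothesis to $\{a,b,c,e,f\}$ for any further point $f$ forces $f\in\ell$), but as written your proposal is a plan rather than a proof; and the opening paragraph about extracting four points in general position is dead weight for the structure you eventually settle on.
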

\begin{proof}
Suppose $V(S)$ is contained in a $3$-flat.
Then $V(S)$ lies in the intersection of two hyperplanes, so $S$ lies in the intersection of the two corresponding conics, which we label $C_1,C_2$.
By B\'{e}zout's inequality, since $|S|\ge 5$, $C_1$ and $C_2$ are both reducible and share a line $\ell$.
We write $C_i=\ell\cup\ell_i$, with $\ell_1$, $\ell_2$ distinct lines.
Thus $C_1\cap C_2=\ell\cup(\ell_1\cap\ell_2)$,
which is a line and a point (or a line, but $S$ is assumed non-collinear),
so $S$ is \emph{near-collinear}, which completes the proof.

Suppose $S$ is near-collinear, so there is a $p\in S$ such that $S\backslash  p$ is collinear.
Then, by Lemma \ref{lem:twoflats}, $V(S\backslash p)$ is contained in a $2$-flat, so $V(S)$ is contained in a $3$-flat.
\end{proof}

\section{The existence of an ordinary conic}

In this section we prove the existence of an ordinary conic using the lemmas proved in the previous two sections.
Note that we do not use the Green-Tao theorem in this section.

As mentioned in the introduction, we make use of ``hyperprojections''.
These are generalizations of the more familiar central projections,
which take a fixed point $p\in\R^D$ and a $(D-1)$-flat $Q$ not containing $p$, and map any point $x\neq p$ to the point $\overline{px}\cap Q$; to make this well-defined, one needs to work in projective space $\mathbb{PR}^D$, but even in $\R^D$ the map is well-defined for most points $x$.
This can be generalized to a hyperprojection from a $k$-flat $P$ in $\R^D$ to a $(D-k-1)$-flat $Q$.
Indeed, for most $x\notin P$, the set $\overline{P\cup x}\cap Q$ should consist of a single point.
See for instance Harris \cite[page 37]{H}.

For us, the goal is to map a given finite set $S$ in $\R^5$ from a $2$-flat $P$ to a $2$-flat $Q$.
Because we are only mapping a finite set $S$,
this map should be well-defined for a ``generic'' choice of $Q$.
In the following lemma we make this precise.

\begin{lemma} \label{lem:generic}
 Given a $2$-flat $P\subset\mathbb{R}^5$ and a finite set $S\subset\mathbb{R}^5$ that is disjoint from $P$, there is a $2$-flat $Q\subset\mathbb{R}^5$ disjoint from $P$ such that, for every $x\in S$, $\overline{P\cup x}\cap Q$ consists of a single point.
\end{lemma}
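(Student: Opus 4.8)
The plan is to show that the set of ``bad'' $2$-flats $Q$—those for which the hyperprojection from $P$ fails to be well-defined on $S$—is a proper subvariety of the variety of all $2$-flats, so that a generic $Q$ works. Fix $x\in S$. Since $x\notin P$ and $P$ is a $2$-flat, $\overline{P\cup x}$ is a $3$-flat in $\R^5$; call it $H_x$. A $2$-flat $Q$ (disjoint from $P$) will fail the condition ``$\overline{P\cup x}\cap Q$ is a single point'' precisely when $H_x\cap Q$ is empty or has dimension at least $1$; generically two flats of dimensions $3$ and $2$ in $\R^5$ meet in dimension $3+2-5=0$, so the bad $Q$ form a lower-dimensional condition. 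First I would parametrize $2$-flats $Q$ disjoint from $P$: pick any fixed $2$-flat $Q_0$ complementary to the $3$-flat $\overline{P\cup p_0}$ for some auxiliary point, and note that a Zariski-open, nonempty set of $2$-flats is disjoint from $P$. Within this parameter space, for each fixed $x$ the locus where $\dim(H_x\cap Q)\ge 1$ is cut out by the vanishing of certain maximal minors (the $5\times 5$ determinant formed from a basis of directions of $Q$, the translation vector, and the directions of $H_x$), hence is Zariski-closed; and it is proper because one can exhibit a single $Q$ transverse to $H_x$ (e.g. translate $Q_0$ appropriately).

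Next I would take the union over the finitely many $x\in S$ of these proper closed loci, together with the proper closed locus of $Q$'s that meet $P$. A finite union of proper Zariski-closed subsets of an irreducible variety is still proper, so its complement is nonempty; any $Q$ in that complement is disjoint from $P$ and satisfies, for every $x\in S$, that $H_x\cap Q$ is a $0$-flat, i.e. exactly one point. That is the desired $Q$.

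Alternatively—and this may be cleaner to write—I would argue directly with a dimension count rather than invoking varieties. Choose coordinates so that $P = \{z_3=z_4=z_5=0\}$, the $z_1z_2$-plane. A candidate $Q$ can be taken of the form $Q = \{(\phi(u,v),\psi(u,v),u,v,1): (u,v)\in\R^2\}$ for an affine-linear map $(\phi,\psi):\R^2\to\R^2$ depending on four parameters (the ``$z_5=1$'' normalization and the choice of which two of $z_3,z_4,z_5$ are free is harmless after a generic linear change, and handling of the affine parts of $Q$ can be folded in). Such a $Q$ is automatically disjoint from $P$ since its last three coordinates never all vanish. For fixed $x=(x_1,\dots,x_5)\notin P$, the $3$-flat $\overline{P\cup x}$ is $\{(s,t,\lambda x_3,\lambda x_4,\lambda x_5):s,t,\lambda\in\R\}$; intersecting with $Q$ forces $\lambda x_3 = u$, $\lambda x_4 = v$, $\lambda x_5 = 1$, and since $(x_3,x_4,x_5)\ne(0,0,0)$ this linear system in $(u,v,\lambda)$ has a unique solution for $\lambda$ whenever the relevant nonzero coordinate is used, determining a unique point of $Q$—this holds for \emph{every} such $Q$, with no genericity needed in these coordinates. (If one of $x_3,x_4,x_5$ vanishes the bookkeeping changes but the unique-point conclusion persists as long as not all three vanish, which is exactly $x\notin P$.)

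The main obstacle I anticipate is purely a matter of care in the setup: ensuring the parametrization of $Q$ is general enough to capture \emph{some} valid $Q$ while keeping the algebra of the intersection $\overline{P\cup x}\cap Q$ transparent, and making sure the ``disjoint from $P$'' requirement does not secretly conflict with the ``single point for every $x$'' requirement. The cleanest route is probably the coordinate computation of the last paragraph, where disjointness from $P$ is built into the normalization and the intersection with each $H_x$ is a $3\times 3$ linear system whose solvability is governed by a single nonvanishing condition inherited from $x\notin P$; then essentially \emph{every} $Q$ in the chosen family works, and one need only check the family is nonempty.
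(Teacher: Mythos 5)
Your first argument (the generic $Q$ via proper Zariski-closed bad loci) is sound and takes a genuinely different route from the paper. The paper works projectively: it compactifies to $\mathbb{PR}^5$, records for each $x\in S$ the $2$-flat at infinity $F_x$ of $\overline{P\cup x}$, explicitly constructs a line at infinity avoiding all the $F_x$ (and hence a $2$-flat $Q$ whose direction plane is transverse to every $\overline{P\cup x}$), and then uses the fact that flats of complementary dimension in projective space must meet to get exactly one intersection point in $\R^5$. Your version replaces this hands-on construction with the observation that transversality of $\operatorname{dir}(Q)$ to each $\operatorname{dir}(\overline{P\cup x})$ is the complement of a determinantal hypersurface, and a finite union of proper closed sets cannot exhaust the parameter space. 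Both are correct; the paper's is more elementary and self-contained (no appeal to irreducibility of the parameter space or to Zariski topology), while yours is shorter once the standard genericity machinery is granted. One small slip: the $5\times 5$ determinant should be formed from the two direction vectors of $Q$ and the three of $\overline{P\cup x}$ only; the translation vector does not belong in it, since transversality of the direction spaces already forces a unique affine intersection point.

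However, the coordinate computation that you recommend as ``the cleanest route'' contains a genuine error. With $P=\{z_3=z_4=z_5=0\}$ and $Q=\{(\phi(u,v),\psi(u,v),u,v,1)\}$, the transversality of $Q$ with $\overline{P\cup x}=\{(s,t,\lambda x_3,\lambda x_4,\lambda x_5)\}$ reduces (after eliminating the first two coordinates) to the nonvanishing of $x_5$. If $x_5=0$ the system $\lambda x_5=1$ has no solution, so $\overline{P\cup x}\cap Q$ is \emph{empty} for every $Q$ in your family; the parenthetical claim that ``the unique-point conclusion persists as long as not all three vanish'' is false, and the assertion that ``this holds for every such $Q$, with no genericity needed'' is incorrect. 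The genericity has not disappeared; it has been hidden in the preliminary ``generic linear change'' of the $(z_3,z_4,z_5)$ coordinates, which must be chosen so that the linear functional defining the normalization is nonzero on $(x_3,x_4,x_5)$ for all of the finitely many $x\in S$. That choice is exactly the content of the lemma (and is the analogue of the paper's choice of the line at infinity avoiding every $F_x$), so it must be stated and justified rather than dismissed. With that correction the coordinate argument goes through, but as written the ``no genericity needed'' version does not prove the lemma.
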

\begin{proof}
We work in projective space for the duration of this proof.
We use the word \emph{flat} and the bar notation also for projective subspaces.
We think of $\mathbb{PR}^5$ simply as a copy of $\R^5$ together with a projective $4$-flat $H$ at infinity.
The crucial fact is that any $k$-flat and $(D-k)$-flat in $\mathbb{PR}^D$ must intersect.
The given flat $P$ and set $S$ can be naturally embedded in $\mathbb{PR}^5$.
We will obtain a $2$-flat $Q\subset \mathbb{PR}^5$ that has the required property, and we will observe that this gives a $2$-flat in $\R^5$ with the same property.

For each $x\in S$, $\overline{P\cup x}$ is a $3$-flat, containing a $2$-flat at infinity, which we label $F_x$.
We construct a line at infinity that avoids $F_x$ for all $x\in S$.
In order to do this, we first pick an arbitrary point $p\in H\backslash \bigcup_{x\in S}F_x$.
Then $\overline{F_x\cup p}$ is a $3$-flat for all $x\in S$,
so we can still pick a point $q\in H$ that is not in $\bigcup_{x\in S} \overline{F_x\cup p}$.
We claim that the line $\overline{pq}$ does not intersect any of the $2$-flats $F_x$.
Indeed, if $\overline{pq}\cap F_x\neq\emptyset$, then, since also $p\in \overline{F_x\cup p}$,
the whole line $\overline{pq}$ is contained in $\overline{ F_x \cup p}$, which would contradict the choice of $q$.

Choose $r\in \R^5$ such that $r$ does not lie in the projective $4$-flat $\overline{P\cup \{p,q\}}$.
Set $Q=\overline{pqr}$,
so that $Q$ is a $2$-flat disjoint from $P$ in $\mathbb{PR}^5$.
Indeed, if $x\in P\cap Q$, then $r\in \overline{P\cup \{p,q\}} $, contradicting the choice of $r$.
Moreover, for any $x\in S$, $Q$ and $\overline{P\cup x}$ do not intersect at infinity, because $Q\cap H = \overline{pq}$, and $\overline{pq}$ was constructed to be disjoint from $\overline{P\cup x}$.
Note that since $Q$ contains $r$, its intersection with $\R^5$ is also a $2$-flat.

We prove that $\overline{P\cup x}\cap Q$ consists of a single point for every $x\in S$, which finishes the proof.
The $2$-flat $Q$ and the $3$-flat $\overline{P\cup x}$ must intersect in $\mathbb{PR}^5$.
Since $Q$ and $\overline{P\cup x}$ do not intersect at infinity,
they must intersect in $\mathbb{R}^5$.
Suppose that $Q$ and $\overline{P\cup x}$ intersect in more than one point, so their intersection contains a line $\ell$.
Then the $1$-flat $\ell$ and the $2$-flat $P$ are contained in the $3$-flat $\overline{P\cup x}$, so they must intersect in $\mathbb{PR}^5$.
On the other hand, $\ell$ is also contained in $Q$, so $P$ and $Q$ intersect, which is a contradiction.

\end{proof}

\begin{remark}\label{rem:hyperprojection}
Lemma \ref{lem:generic} tells us that, given a $2$-flat $P$ and a finite set $S$ disjoint from $P$, there is a $2$-flat $Q_{P,S}$ and a well-defined hyperprojection $\pi_{P,S}:S\rightarrow Q_{P,S}$ such that $\pi_{P,S}(x)$ is the point in $\overline{P\cup x}\cap Q_{P,S}$.
We will suppress the dependence on $P$ and $S$ when it is clear from the context.
\end{remark}

In the following lemma we investigate the injectivity of this hyperprojection on the Veronese image of a finite set of points in the plane.
We write $\Delta_{pqr}=\overline{pq} \cup \overline{pr} \cup \overline{qr}$ for the extended triangle spanned by $p,q,r$.
When we say that a map from $A$ to $B$ is injective on $A'\subset A$, we mean that no two elements of $A'$ are mapped to the same element of $B$.

\begin{lemma}\label{lem:injective}
Let $p,q,r\in \mathbb{R}^2$ be non-collinear and $P=\overline{V(p)V(q)V(r)}$.
Then for any finite set of points $S\subseteq\mathbb{R}^2\backslash\{p,q,r\}$, the hyperprojection\footnote{We abuse notation slightly, because $\pi$ is not defined at some points, including $V(p),V(q),V(r)$.
When applying a function to a set, we will simply ignore elements of the set at which the function is not defined.} $\pi_{P,V(S)}:V(S)\to Q_{P,V(S)}$ from  Remark \ref{rem:hyperprojection}
is injective on $V\left(S\backslash \Delta_{pqr}\right)$ (see Figure \ref{fig:fig1}).

Moreover, there are $\alpha_1,\alpha_2,\alpha_3\in Q_{P,V(S)}$ such that $\pi(V(\Delta_{pqr}))= \{\alpha_1,\alpha_2,\alpha_3\}$, and more precisely we have $\pi\left(V(\overline{pq})\right)=\{\alpha_1\}$,
$\pi\left(V(\overline{pr})\right)=\{\alpha_2\}$,
and $\pi\left(V(\overline{qr})\right)=\{\alpha_3\}$.
\end{lemma}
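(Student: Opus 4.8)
The plan is to understand the hyperprojection $\pi = \pi_{P,V(S)}$ by analyzing, for a point $s\in S$, when two points $V(s)$ and $V(t)$ can be sent to the same point of $Q$. Since $\pi(V(s))$ lies in $\overline{P\cup V(s)}\cap Q$ and $\pi(V(t))$ lies in $\overline{P\cup V(t)}\cap Q$, having $\pi(V(s)) = \pi(V(t))$ forces a common point of the two $3$-flats $\overline{P\cup V(s)}$ and $\overline{P\cup V(t)}$ that lies off $P$ (it lies in $Q$, which is disjoint from $P$). The key algebraic fact I would extract is: \emph{$\overline{P \cup V(s)}$ and $\overline{P\cup V(t)}$ share a point outside $P$ if and only if the $4$-flat $\overline{P \cup V(s) \cup V(t)}$ has dimension $3$}, i.e.\ $\overline{V(p)V(q)V(r)V(s)V(t)}$ is only a $3$-flat. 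By Corollary \ref{cor:twoflats} and Lemma \ref{lem:twoflats}/\ref{lem:threeflats}, $V(p),V(q),V(r),V(s),V(t)$ span a $3$-flat precisely when $\{p,q,r,s,t\}$ is near-collinear; since $p,q,r$ are already non-collinear, this means four of the five points are collinear. I would then split into cases according to which four are collinear: if $s,t$ and two of $p,q,r$ are collinear (say $p,q,s,t$), then $s,t\in\overline{pq}$, so both lie in $\Delta_{pqr}$; if $p,q,r$ and one of $s,t$ are collinear, that contradicts non-collinearity of $p,q,r$. Hence the only way two distinct points of $V(S)$ collide under $\pi$ is that both sit on one of the three lines $\overline{pq},\overline{pr},\overline{qr}$ — which is exactly what the ``moreover'' part asserts, and it gives injectivity on $V(S\setminus\Delta_{pqr})$.

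Concretely, the steps are: (1) Set up the dimension criterion: for $x,y\notin P$ with $V(p),V(q),V(r)$ spanning $P$, the flats $\overline{P\cup x}$ and $\overline{P\cup y}$ meet outside $P$ iff $\dim\overline{P\cup\{x,y\}}=3$. This is linear algebra about affine spans and follows from the fact that $\dim\overline{P\cup x} + \dim\overline{P\cup y} - \dim\overline{P\cup\{x,y\}} \le \dim(\text{their intersection})$ combined with $P$ being contained in both. (2) Translate via the Veronese dictionary: $\dim\overline{V(p)V(q)V(r)V(s)V(t)} \le 3$ iff $V(\{p,q,r,s,t\})$ lies in a $3$-flat iff (Lemma \ref{lem:threeflats}, applied to the $5$-point set when it is non-collinear, with Lemma \ref{lem:twoflats} covering the collinear subcases) the five points are near-collinear. (3) Do the case analysis on which four points are collinear, using that $p,q,r$ are non-collinear, to conclude that any collision forces $s,t$ to lie together on one spanned line of the triangle. (4) For the ``moreover'' part, note that for $s,s'\in\overline{pq}\setminus\{p,q\}$ the four points $p,q,s,s'$ are collinear, so by the above $\overline{P\cup V(s)}$ and $\overline{P\cup V(s')}$ meet outside $P$; since this common point lies on $Q$, we get $\pi(V(s))=\pi(V(s'))$, so $\pi$ is constant on $V(\overline{pq})$, and the analogous statements hold for $\overline{pr}$ and $\overline{qr}$. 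That these three constants $\alpha_1,\alpha_2,\alpha_3$ are distinct follows because $V(\overline{pq})$, $V(\overline{pr})$, $V(\overline{qr})$ each contain infinitely many points while sharing a common point, so a collision between two of the lines would — by step (3) applied in reverse — force too many of the five-point configurations to be near-collinear, or more directly: if $\pi(V(\overline{pq})) = \pi(V(\overline{pr}))$, pick $s\in\overline{pq}$, $s'\in\overline{pr}$ generic; then $\overline{P\cup V(s)}\cap\overline{P\cup V(s')}$ contains a point off $P$, forcing $p,q,r,s,s'$ near-collinear, impossible for generic choices.

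The main obstacle I anticipate is step (1): carefully justifying the dimension criterion for when two $3$-flats both containing the $2$-flat $P$ intersect away from $P$, and in particular ruling out that their intersection is contained in $P$ even when the joint span drops to dimension $3$. One has to argue that if $\overline{P\cup x}=\overline{P\cup y}$ (the span is exactly a $3$-flat), then this common $3$-flat meets $Q$ in a single point by the defining property of $Q$ from Lemma \ref{lem:generic}, and that this point cannot lie in $P$ since $Q\cap P=\emptyset$; whereas if the span is a genuine $4$-flat, the two $3$-flats meet in exactly the $2$-flat $P$ and hence $\pi(x)\ne\pi(y)$. A secondary subtlety is handling the degenerate possibility that $\{p,q,r,s,t\}$ is collinear rather than merely near-collinear (excluded since $p,q,r$ are non-collinear) and making sure Lemma \ref{lem:threeflats} is applicable, which requires the five-point set to be non-collinear — this is automatic here — so the cited lemma applies directly. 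Everything else is bookkeeping with the three lines of $\Delta_{pqr}$.
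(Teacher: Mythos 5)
Your proposal is correct and follows essentially the same route as the paper: a collision under $\pi$ forces the five Veronese points $V(p),V(q),V(r),V(s),V(t)$ into a common $3$-flat, Lemma \ref{lem:threeflats} then gives near-collinearity of $\{p,q,r,s,t\}$, and non-collinearity of $p,q,r$ forces $s,t\in\Delta_{pqr}$; the converse direction yields the ``moreover'' part. The only differences are cosmetic --- your explicit dimension criterion is an expanded version of the paper's direct observation that a common image point $\alpha\in Q$ puts all five points in the $3$-flat $\overline{P\cup\alpha}$, and you additionally argue distinctness of $\alpha_1,\alpha_2,\alpha_3$, which the paper does not address (and does not need).
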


\begin{figure}[ht]
\centering
\includegraphics[height=2.5in]{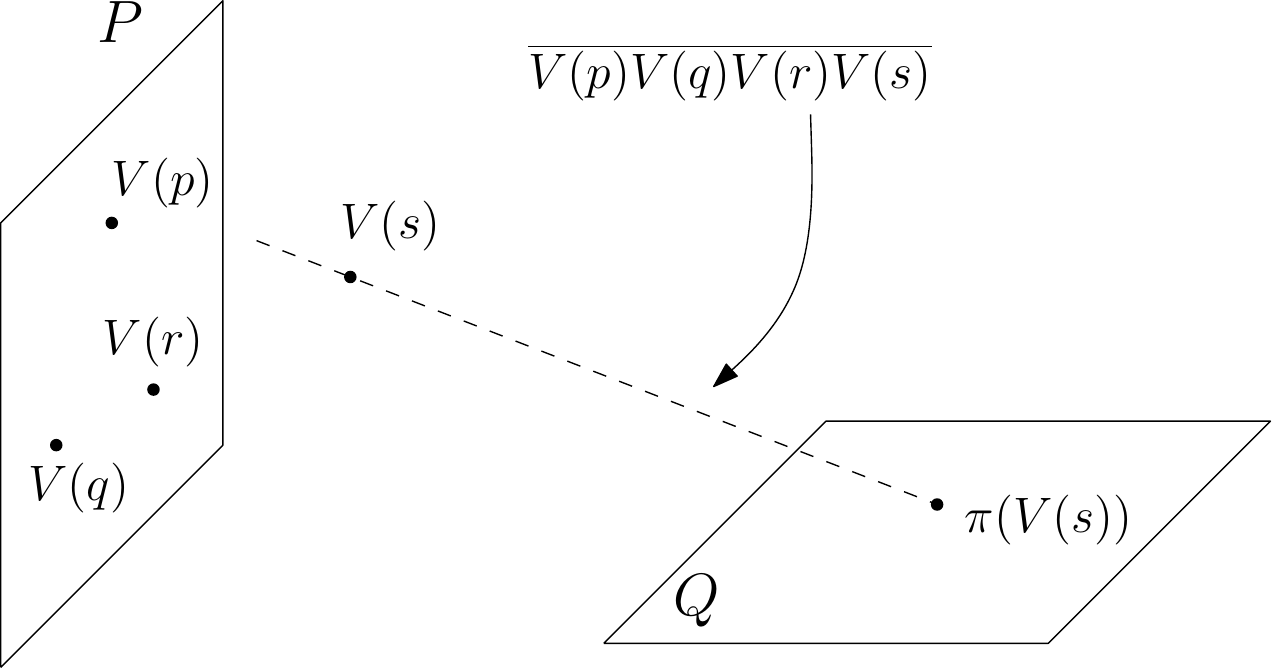}
 \caption{The projection $\pi$. The dashed line represents the $3$-flat spanned by $P$ and $V(s)$.}
 \label{fig:fig1}
\end{figure}

\begin{proof}
Note that by Lemma \ref{lem:twoflats} and the fact that $p,q,r$ are non-collinear, $P$ does not contain any other points of $V(S)$, so the hyperprojection is well-defined on $V(S\backslash\{p,q,r\})$

Suppose there are distinct $s,t\in S\backslash \Delta_{pqr}$ such that $\pi(V(s))=\pi(V(t))=\alpha$,
so the points $V(p), V(q),V(r),V(s),V(t)$ all lie on the $3$-flat $\overline{P\cup \alpha}$.
From Lemma \ref{lem:threeflats} we obtain that $p,q,r,s,t$ form a near-collinear configuration (see Figure \ref{fig:fig2}).
It follows that $s$ and $t$ lie in $\Delta_{pqr}$, which is a contradiction.
Therefore, $\pi$ is injective on $V\left(S\backslash \Delta_{pqr}\right)$.

To prove the second part of the lemma,
it suffices to show that for any $s,t\in \overline{pq}\backslash\{p,q\}$ we have $\pi(V(s)) = \pi(V(t))$.
The set $T=\{p,q,r,s,t\}$ is near-collinear,
so by Lemma \ref{lem:threeflats},
$V(T)$ is contained in a $3$-flat,
whose intersection with $Q$ is a single point $\alpha\in Q$.
Then we have $\pi(V(s))=\alpha=\pi(V(t))$, which completes the proof.
\end{proof}

\begin{figure}[ht]
\centering
\includegraphics[height=2.5in]{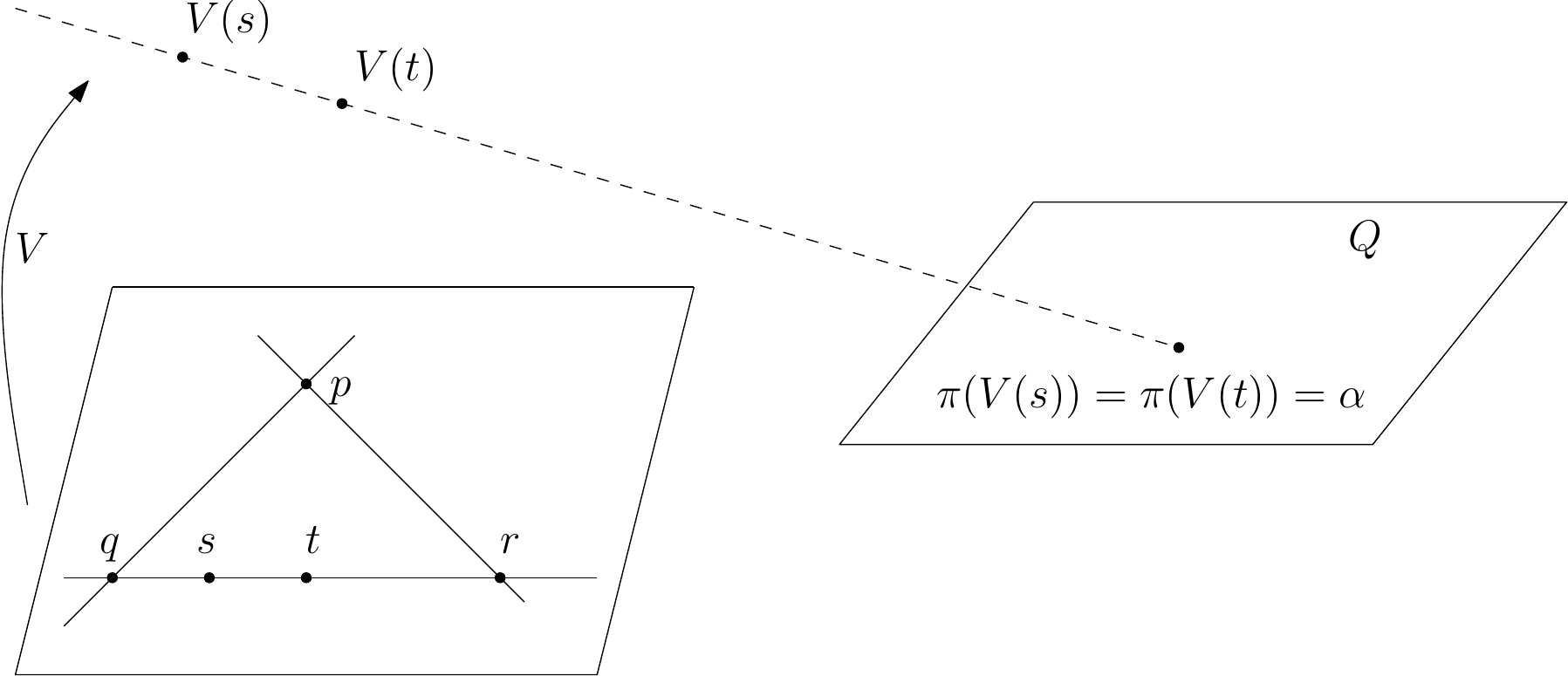}
 \caption{Non-injectivity of $\pi$ on $\Delta_{pqr}$.}\label{fig:fig2}
\end{figure}

We are now fully equipped to prove the existence theorem.

\begin{theorem}\label{thm:existence}
A finite set $S\subset\R^2$ that is not contained in a conic determines at least one ordinary conic.
\end{theorem}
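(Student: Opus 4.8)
The plan is to reduce the existence of an ordinary conic to the Sylvester--Gallai theorem in a cleverly chosen plane, using the hyperprojection machinery just developed. First I would pick a convenient triple of points $p,q,r\in S$. Since $S$ is not co-conic, it is in particular not near-collinear (by Lemma \ref{lem:threeflats}, or directly), so by Lemma \ref{lem:twolines} there is a point $p\in S$ lying on two lines each with two or three points of $S$; I will choose $q$ on the first and $r$ on the second, so that $\overline{pq}$ and $\overline{pr}$ each contain at most three points of $S$, and in particular $p,q,r$ are non-collinear. (Using such a triple is not strictly necessary, as the introduction notes, but it simplifies the bookkeeping on $\Delta_{pqr}$.) Set $P=\overline{V(p)V(q)V(r)}$; by Corollary \ref{cor:twoflats} this is an ordinary $2$-flat for $V(S)$, so $V(S)$ is disjoint from $P$ except at $V(p),V(q),V(r)$, and Lemma \ref{lem:generic} gives a hyperprojection $\pi\colon V(S\setminus\{p,q,r\})\to Q$ to a $2$-flat $Q$.

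Next I would analyze the image $\pi(V(S\setminus\{p,q,r\}))$ inside the plane $Q$. By Lemma \ref{lem:injective}, $\pi$ is injective on $V(S\setminus\Delta_{pqr})$, and it collapses each of the three (punctured) lines $\overline{pq},\overline{pr},\overline{qr}$ to single points $\alpha_1,\alpha_2,\alpha_3\in Q$. So the image set $S'\subset Q$ consists of the injective image of $V(S\setminus\Delta_{pqr})$ together with at most three extra points $\alpha_1,\alpha_2,\alpha_3$ (one for each of the three lines that actually contains points of $S$ beyond $p,q,r$). I then need to know that $S'$ is not collinear in the plane $Q$. If it were, then $V(S\setminus\Delta_{pqr})$ together with $\{\alpha_1,\alpha_2,\alpha_3\}$ would lie on a line $m\subset Q$, hence $\overline{P\cup m}$ would be a $3$-flat (or less) containing all of $V(S)$: indeed every point of $V(S)$ is either in $P$, or maps under $\pi$ into $m$, and in the latter case lies in $\overline{P\cup m}$. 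By Lemma \ref{lem:threeflats} this forces $S$ to be near-collinear, contradicting that $S$ is not co-conic. (I should double-check the edge cases where $S\setminus\Delta_{pqr}$ is small; here the choice of $p,q,r$ from Lemma \ref{lem:twolines} helps, since $\Delta_{pqr}$ contains few points of $S$, so if $|S|$ is at all large $S\setminus\Delta_{pqr}$ is nonempty, and tiny cases can be checked by hand.)

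Now apply the Sylvester--Gallai theorem to the non-collinear set $S'$ in the plane $Q$: there is an ordinary line $L\subset Q$ containing exactly two points of $S'$. I want these two points to be images of genuine points of $S$, not the collapsed points $\alpha_i$. Here I would use Lemma \ref{lem:avoid}: provided $S'$ is not near-collinear, there is an ordinary line for $S'$ avoiding any prescribed point, and applying this to avoid $\alpha_1$, then re-examining, one arranges an ordinary line $L$ of $S'$ through two points $\pi(V(s)),\pi(V(t))$ with $s,t\in S\setminus\Delta_{pqr}$ — in fact one wants to avoid all of $\alpha_1,\alpha_2,\alpha_3$, which needs the slightly stronger statement that an ordinary line avoiding finitely many points exists; for the existence theorem one can instead argue more cheaply, e.g. remove the $\alpha_i$ and note that $S'\setminus\{\alpha_1,\alpha_2,\alpha_3\}$ is still non-collinear (by the same $3$-flat argument) so has its own ordinary line, which is then ordinary for $S'$ too unless some $\alpha_i$ lies on it — and one can perturb the choice of $Q$, or of $r$, to avoid that. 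Either way I obtain $s,t\in S\setminus\Delta_{pqr}$ with $\pi(V(s)),\pi(V(t))$ spanning an ordinary line of $S'$.

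Finally I would check that $p,q,r,s,t$ determine an ordinary conic. Consider the $3$-flat $G=\overline{P\cup L}=\overline{V(p)V(q)V(r)V(s)V(t)}$ (using $\pi(V(s)),\pi(V(t))\in L$, so $V(s),V(t)\in\overline{P\cup L}$). These five points of $V(S)$ are not in a $2$-flat: if they were, then $V(\{p,q,r,s,t\})$ lies in a $2$-flat, so by Lemma \ref{lem:twoflats} $p,q,r,s,t$ are collinear, contradicting non-collinearity of $p,q,r$. Hence $G$ is exactly the $3$-flat they span, and it is contained in a unique hyperplane $\Pi$, corresponding to a unique conic $C$ through $p,q,r,s,t$. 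I claim $C$ contains no sixth point of $S$: if $V(u)\in\Pi$ for some $u\in S\setminus\{p,q,r,s,t\}$, I need to derive a contradiction. The point $V(u)$ is not in $P$ (as $P$ is ordinary for $V(S)$), so $\pi(V(u))$ is defined and lies in $\overline{P\cup u}\cap Q\subseteq \Pi\cap Q$. Now $\Pi\cap Q$ is a line in $Q$ (since $Q\not\subset\Pi$, because $Q$ meets $P$... wait, $Q$ is disjoint from $P$; rather: if $Q\subset\Pi$ then $\Pi\supseteq\overline{P\cup Q}=\R^5$, impossible), and it contains $\alpha_1,\alpha_2,\alpha_3$ and the two points $\pi(V(s)),\pi(V(t))$ — hence it is the line $L$. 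So $\pi(V(u))\in L$, and since $u\notin\Delta_{pqr}$ (as $\pi(V(u))\notin\{\alpha_1,\alpha_2,\alpha_3\}$, using that $L$ is ordinary for $S'$ at exactly $\pi(V(s)),\pi(V(t))$ which are distinct from the $\alpha_i$), $\pi$ is injective at $V(u)$, so $\pi(V(u))$ is a third point of $S'$ on $L$, contradicting that $L$ is ordinary for $S'$. Therefore $C$ is an ordinary conic for $S$.

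\textbf{Main obstacle.} The delicate point is ensuring the ordinary line $L$ of $S'$ can be taken to miss the collapsed points $\alpha_1,\alpha_2,\alpha_3$ — i.e. passing through two \emph{honest} images $\pi(V(s)),\pi(V(t))$ — and simultaneously verifying $S'$ (and $S'$ minus the $\alpha_i$) is non-collinear so that Sylvester--Gallai and Lemma \ref{lem:avoid} apply; all the $3$-flat/$2$-flat dimension counts in the last paragraph are routine once the setup is pinned down, but they must be done carefully because a double-line conic must be excluded, which is exactly what the ``unique hyperplane'' / Lemma \ref{lem:twoflats} arguments are guarding against.
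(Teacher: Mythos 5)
Your outline follows the paper's strategy (choose $p,q,r$ via Lemma \ref{lem:twolines}, hyperproject from $\overline{V(p)V(q)V(r)}$, apply Sylvester--Gallai in the image plane, lift an ordinary line back to an ordinary conic), but there is a genuine gap exactly at the point you flag as the ``main obstacle'': the case where every ordinary line of $S'$ passes through one of the collapsed points $\alpha_i$. Your fallback --- that $S'\setminus\{\alpha_1,\alpha_2,\alpha_3\}$ is ``still non-collinear by the same $3$-flat argument'' --- is false. If $\pi(V(S\setminus\Delta_{pqr}))$ is collinear, the conclusion is only that $(S\setminus\Delta_{pqr})\cup\{p,q,r\}$ lies on a conic $C$, hence $S\subseteq C\cup\Delta_{pqr}$; this is entirely consistent with $S$ not being co-conic, so there is no contradiction to extract. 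This degenerate configuration (a conic plus a line carrying many points of $S$) is real, and perturbing $Q$ cannot rescue you: the incidence structure of the image is determined by which $3$-flats through $P$ contain which points of $V(S)$, independently of the choice of $Q$. The paper handles this case with a separate argument: it first reduces to the situation where no line contains exactly three points of $S$ (if one does, that line together with an ordinary line of its complement is already an ordinary reducible conic), and then, in the degenerate case $S\subseteq C\cup\overline{qr}$, it picks $u\in\overline{qr}\setminus C$ and $v,w\in C\setminus\{q,r\}$, uses B\'ezout to show each side of $\Delta_{uvw}$ has at most three points of $S$ --- hence exactly two by the reduction --- and reruns the projection from this ordinary triangle, where $\pi$ is injective on all of $V(S)$. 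You skip the initial three-point reduction entirely, which is also why you are stuck managing up to three collapsed points instead of one.

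Two secondary issues, both dimension counts. First, $\overline{P\cup m}$ for a $2$-flat $P$ and a disjoint line $m\subset Q$ is generically a $4$-flat, not a $3$-flat; the correct conclusion when $S'$ is collinear is that $V(S)$ lies in a hyperplane, i.e.\ $S$ is co-conic (which is what the paper says), not that $S$ is near-collinear. Second, in your final verification you assert that the five points $V(p),\dots,V(t)$ span a $3$-flat contained in a \emph{unique} hyperplane; a $3$-flat in $\R^5$ lies in infinitely many hyperplanes, so as written the uniqueness of the conic --- which is essential for ordinariness --- is not established. The fix is to show the five points span a $4$-flat: since $\pi(V(s))\neq\pi(V(t))$, the point $V(t)$ cannot lie in the $3$-flat $\overline{P\cup V(s)}$. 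This is easy, but it must be said, because excluding the non-determined (e.g.\ four-points-on-a-line) conics is precisely the subtlety the definition of ordinary conic is guarding against.
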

\begin{proof}
If $S$ determines a line $\ell$ with exactly three points from $S$, then we are done.
Indeed, since the set $S$ is not contained in a conic, the set $S\backslash \ell$ is not collinear.
Hence, by the Sylvester-Gallai theorem, there is an ordinary line $\ell'$ for $S\backslash \ell$, so $\ell\cup \ell'$ is an ordinary conic for $S$.
Hence we can assume that no line contains exactly three points of $S$.

We wish to find an ordinary hyperplane for $V(S)\subset\mathbb{R}^5$.
Since no line contains exactly three points of $S$, Lemma \ref{lem:twolines} gives us three points $p,q,r\in S$ so that the lines $\overline{pq}$ and $\overline{pr}$ are ordinary for $S$.
Set $P = \overline{V(p)V(q)V(r)}$; by Corollary \ref{cor:twoflats} and the fact that $p,q,r$ are non-collinear, $P$ contains no other points of $V(S)$.
Lemma \ref{lem:generic} then gives us a $2$-flat $Q\subset\mathbb{R}^5$ disjoint from $P$ and a well-defined hyperprojection $\pi:V(S)\to Q$. Moreover, since the lines $\overline{pq}$ and $\overline{pr}$ are ordinary,
Lemma \ref{lem:injective} tells us that $\pi$ is injective on $V\left(S\backslash\overline{qr}\right)$,
and that there is at most one point $\alpha=\pi(V(\overline{qr}))\in Q$ for which $\pi^{-1}(\alpha)\cap V(S)$ consists of more than one point.
Note that if we also had $\overline{qr}\cap S = \{q,r\}$,
then $\pi$ would be injective on $V(S)$.
However, as mentioned in Section \ref{sec:ordinarylines}, we cannot guarantee that $S$ determines an ordinary triangle, i.e., a triple $p,q,r$ with all three lines ordinary.

Let $B=\pi(V(S))$.
Note that $B$ cannot be contained in a line, since otherwise $V(S)$ would lie on a hyperplane, which is equivalent to all the points of $S$ being contained in a conic.
Hence, there is at least one ordinary line for $B$.
If there is an ordinary line that avoids $\alpha$, say $\overline{\sigma\tau}$ with $\sigma, \tau\in B$,
then, since $\pi$ is injective away from $\alpha$, there are unique points $s,t\in S$ such that $\pi(V(s)) = \sigma$ and $\pi(V(t)) = \tau$.
In this case $\overline{V(p)V(q)V(r)V(s)V(t)}$ is an ordinary hyperplane for $V(S)$ and we are done.

We are left with the case where no ordinary line for $B$ avoids $\alpha$.
By Lemma \ref{lem:avoid} and the fact that $B$ is not collinear,
$B$ is near-collinear.
In other words, there is a line in $Q$ that contains $B\backslash \alpha = \pi(V(S\backslash \overline{qr}))$.
It follows that $S\backslash\overline{qr}$ is contained in a conic $C$,
so $S$ is contained in the union of $C$ and $\overline{qr}$.
Note that $q,r\in C$.
Since $S$ is not contained in a conic, 
it must contain a point $u\in \overline{qr}\backslash C$ and two points $v,w\in C\backslash\{q,r\}$.
B\'{e}zout's inequality tells us that each of the three lines $\overline{uv},\overline{uw},\overline{vw}$ contains at most three points.
But by the assumption made at the start of the proof, no line has exactly three points of $S$.
Therefore, the triangle $\Delta_{uvw}$ is ordinary, i.e., each of its three lines is ordinary.
In that case, repeating the argument above with $u,v,w$ instead of $p,q,r$, we get a hyperprojection $\pi$ that is injective on $V(S)$.
Then any ordinary line in $Q$ gives an ordinary conic for $S$.
\end{proof}

It might be considered improper to take a \emph{reducible} ordinary conic (consisting of two lines) as we do in the first paragraph of the proof of Theorem \ref{thm:existence}.
However, Czapli\'nski et al. \cite{Polish} give a construction that contains no \emph{irreducible} ordinary conic.

\begin{remark}\label{rem:DM}
As remarked in the introduction,
Devillers and Mukhopadhyay \cite{DM} gave a proof of Theorem \ref{thm:existence} that we think is incorrect.
Their initial setup is similar to that in our proof of Theorem \ref{thm:existence}, but they seem to ignore the fact that the hyperprojection need not be injective.

In fact, their Theorem 1 states that if $S$ is not co-conic, then for every non-collinear $p,q,r$ there would be $3|S|/7$ ordinary conics containing $p,q,r$.
This is false.
Take for instance $|S|-3$ points on a line $\ell$ and three non-collinear points $p,q,r$ off $\ell$.
Then an ordinary conic for $S$ cannot contain $\ell$, so $p$ together with two points $s,t$ from $\ell$ form a non-collinear triple that is contained in exactly one ordinary conic, namely the one spanned by $p,q,r,s,t$.
\end{remark}

\section{The number of ordinary conics}

In this section we prove Theorem \ref{thm:main}.
First we prove a weaker version, where the point set is assumed to be in general position.
We do this as a warmup for our main proof,
and also because this best-case scenario tells us what the best coefficient of $|S|^4$ is that we could expect from our proof.

\begin{theorem}\label{thm:generalposition}
Let $S\subset \R^2$ be a finite set that is not contained in a conic and has no three points on a line.
Then $S$ determines $\frac{1}{120}|S|^4 - O(|S|^3)$ ordinary conics.
\end{theorem}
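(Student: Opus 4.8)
The plan is to run the hyperprojection argument from the proof of Theorem~\ref{thm:existence} not for one cleverly chosen triple but for \emph{every} triple of points of $S$, and then to finish by a double-counting of incidences between triples and ordinary conics. Fix an arbitrary triple $p,q,r\in S$. Since no three points of $S$ are collinear, $p,q,r$ are non-collinear, so by Corollary~\ref{cor:twoflats} the $2$-flat $P=\overline{V(p)V(q)V(r)}$ is ordinary for $V(S)$, and Lemma~\ref{lem:generic} (via Remark~\ref{rem:hyperprojection}) supplies a $2$-flat $Q$ disjoint from $P$ and a well-defined hyperprojection $\pi\colon V(S)\to Q$. Because $S$ has no three collinear points, the extended triangle $\Delta_{pqr}$ meets $S$ only in $\{p,q,r\}$, so applying Lemma~\ref{lem:injective} with $S\setminus\{p,q,r\}$ in place of $S$ shows that $\pi$ is injective on $V(S\setminus\{p,q,r\})$. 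Put $B=\pi(V(S\setminus\{p,q,r\}))$, a set of exactly $|S|-3$ points. As in the proof of Theorem~\ref{thm:existence}, $B$ is not collinear, for otherwise $V(S)$ would lie on a hyperplane and $S$ would be co-conic.

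Next I would verify that every ordinary line for $B$ produces an ordinary conic for $S$ through $p,q,r$. Given an ordinary line $\overline{\sigma\tau}$ for $B$, injectivity of $\pi$ yields unique $s,t\in S\setminus\{p,q,r\}$ with $\pi(V(s))=\sigma$ and $\pi(V(t))=\tau$. The five points $p,q,r,s,t$ are distinct, have no three collinear, and (being five points) are not near-collinear, so by Lemma~\ref{lem:threeflats} their Veronese images are affinely independent and span a hyperplane $H$; under the Veronese correspondence $H$ is the unique conic through $p,q,r,s,t$, so this conic is determined by the five points. The crux is that $H\cap Q=\overline{\sigma\tau}$: it is a line, and it contains both $\sigma=\pi(V(s))$ and $\tau=\pi(V(t))$ since $\overline{P\cup V(s)}$ and $\overline{P\cup V(t)}$ lie inside $H$. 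Consequently, if some further $V(u)$ with $u\in S\setminus\{p,q,r,s,t\}$ lay on $H$, then $\pi(V(u))\in\overline{P\cup V(u)}\cap Q\subseteq H\cap Q=\overline{\sigma\tau}$, forcing $\pi(V(u))\in\{\sigma,\tau\}$ and hence, by injectivity, $u\in\{s,t\}$ --- a contradiction. So $H$ is an ordinary hyperplane for $V(S)$, i.e.\ the conic is ordinary for $S$. Distinct ordinary lines for $B$ give distinct ordinary conics through $p,q,r$ (they determine the unordered pair $\{s,t\}$, hence $\{\sigma,\tau\}$), so by Theorem~\ref{thm:greentao}$(a)$, applicable once $|S|$ exceeds the Green--Tao threshold, the triple $p,q,r$ lies on at least $|B|/2=(|S|-3)/2$ ordinary conics.

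Finally I would double-count. Let $N$ be the number of ordinary conics for $S$, and count pairs $(\{p,q,r\},C)$ in which $C$ is an ordinary conic and $\{p,q,r\}$ is a $3$-subset of the five points $C\cap S$. Summing over conics gives exactly $\binom{5}{3}N=10N$ such pairs; summing over triples gives at least $\binom{|S|}{3}\cdot\frac{|S|-3}{2}$. Therefore
\[
N\ \ge\ \frac{1}{10}\binom{|S|}{3}\cdot\frac{|S|-3}{2}\ =\ \frac{|S|(|S|-1)(|S|-2)(|S|-3)}{120}\ =\ \frac{|S|^4}{120}-O(|S|^3),
\]
and for $|S|$ below the Green--Tao threshold the bound holds vacuously after enlarging the implied constant.

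As for the main obstacle: no individual step is deep, since the geometry is already packaged in Corollary~\ref{cor:twoflats} and Lemmas~\ref{lem:generic}, \ref{lem:injective}, \ref{lem:threeflats}. The step needing the most care is the passage from an ordinary line for $B$ to a genuinely ordinary (and \emph{determined}) conic for $S$ --- pinning down $H\cap Q=\overline{\sigma\tau}$ and excluding a sixth point of $S$ on the conic --- together with checking that the double-counting is exact: every ordinary conic carries exactly five points of $S$, and in general position all $\binom{5}{3}=10$ of its triples are legitimate non-collinear triples, so the factor $10$ is tight. The eventual passage to the full Theorem~\ref{thm:main} will be harder, since there a triple $p,q,r$ need not avoid $\Delta_{pqr}$, forcing one to work with ordinary lines of $B$ that avoid a small exceptional set and to invoke Lemma~\ref{lem:avoidset} rather than just Theorem~\ref{thm:greentao}$(a)$; in the general-position case this complication simply does not arise.
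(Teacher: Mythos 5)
Your argument is correct and follows the paper's proof essentially step for step: run the hyperprojection for every triple (all of which are non-collinear by general position), use Lemma \ref{lem:injective} to get injectivity on all of $V(S)$, apply Theorem \ref{thm:greentao}$(a)$ to the projected set, and divide by the multiplicity $\binom{5}{3}$. The only difference is that you spell out in more detail why the resulting hyperplane $\overline{V(p)V(q)V(r)V(s)V(t)}$ is ordinary (via $H\cap Q=\overline{\sigma\tau}$), a verification the paper leaves implicit; this is a welcome addition but not a different method.
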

\begin{proof}
Let $p,q,r$ be any triple from $S$ and set $P = \overline{V(p)V(q)V(r)}$.
The triple is non-collinear,
so by Corollary \ref{cor:twoflats} $P$ contains no other points.
By Remark \ref{rem:hyperprojection}, there is a plane $Q\subset \R^5$ and a well-defined projection $\pi:V(S)\to Q$.
Because $\Delta_{pqr}$ contains no points of $S$ other than $p,q,r$,
Lemma \ref{lem:injective} tells us that $\pi$ is injective on $V(S)$.

By Theorem \ref{thm:greentao}, $\pi(V(S))$ determines at least $(|S|-3)/2 \geq |S|/2-2$ ordinary lines (we can assume $S$ to be sufficiently large by adjusting the constants in the theorem).
If $\overline{\sigma\tau}$ with $\sigma,\tau\in \pi(V(S))$ is such an ordinary line,
then by injectivity of $\pi$ there are unique $s,t\in S$ such that $\pi(V(s)) = \sigma$ and $\pi(V(t)) = \tau$.
Hence $\overline{V(p)V(q)V(r)V(s)V(t)}$ is an ordinary hyperplane for $V(S)$, and $p,q,r,s,t$ determine an ordinary conic for $S$.

We get at least $|S|/2-2$ ordinary conics for each of the $\binom{|S|}{3}$ triples from $S$.
Each ordinary conic is counted $\binom{5}{3}$ times.
Therefore, we find at least
\[\frac{1}{\binom{5}{3}} \left(\frac{1}{2}|S|-2\right) \binom{|S|}{3}  = \frac{1}{120} |S|^4 - O(|S|^3)\]
ordinary conics for $S$.
\end{proof}

The following lemma gives a simple bound on the number of non-collinear triples determined by a point set.
Such a bound must have some condition on the maximum number of points on a line, since, for instance, a near-collinear set $S$ determines only $\binom{|S|-1}{2}$ non-collinear triples.

\begin{lemma}\label{lem:noncollinear}
Let $S\subset \R^2$ be a finite set with at most $c|S|$ points on a line.
Then $S$ determines at least $\frac{1}{3}(1-c)|S|\binom{|S|}{2}$ non-collinear triples.
\end{lemma}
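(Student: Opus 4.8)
The plan is to count non-collinear triples by a double-counting argument that bounds how many triples can be \emph{collinear}. First I would fix a point $s\in S$ and count the collinear triples that contain $s$ as one of their three points; by summing over $s$ and correcting for overcounting I get the total number of collinear triples, and subtracting from $\binom{|S|}{3}$ gives a lower bound on the non-collinear ones.

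Concretely, for a fixed $s\in S$, consider the lines through $s$ that contain at least one other point of $S$; these lines partition $S\backslash\{s\}$ into groups, say of sizes $k_1,k_2,\ldots$, with $\sum_i k_i = |S|-1$ and each $k_i \le c|S|$ (since a line through $s$ has at most $c|S|$ points, hence at most $c|S|-1 < c|S|$ points other than $s$). The number of collinear triples having $s$ as one vertex and the other two on the same line through $s$ is $\sum_i \binom{k_i}{2}$. Using convexity together with the cap $k_i \le c|S|$, this sum is maximized when the $k_i$ are as unequal as possible, i.e. concentrated in blocks of size $c|S|$; a clean way to phrase the bound is $\sum_i \binom{k_i}{2} \le \frac{1}{2}\sum_i k_i(k_i-1) \le \frac{1}{2}(c|S|-1)\sum_i k_i \le \frac{1}{2}c|S|\cdot(|S|-1)$, which I'll write as at most $c|S|\binom{|S|-1}{2}\cdot\frac{|S|-1}{|S|-2}$ or simply absorb into $c|S|\binom{|S|}{2}$-type terms. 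Summing over all $s\in S$ counts each collinear triple exactly three times (once for each of its three points), so the number of collinear triples is at most $\frac{1}{3}|S|\cdot c|S|\binom{|S|-1}{2} \le \frac{c}{3}|S|\binom{|S|}{2}$ after bounding $\binom{|S|-1}{2}\le\binom{|S|}{2}$.

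Therefore the number of non-collinear triples is at least
\[
\binom{|S|}{3} - \frac{c}{3}|S|\binom{|S|}{2}.
\]
To reach the stated form $\frac{1}{3}(1-c)|S|\binom{|S|}{2}$, I would rewrite $\binom{|S|}{3} = \frac{|S|-2}{3}\binom{|S|}{2} \le \frac{|S|}{3}\binom{|S|}{2}$ — wait, that inequality goes the wrong way, so instead I use $\binom{|S|}{3} = \frac{|S|-2}{3}\binom{|S|}{2} \ge \frac{1}{3}(|S|-2)\binom{|S|}{2}$ is not quite $\frac{1}{3}|S|\binom{|S|}{2}$ either; the honest route is to be slightly more careful with the overcount, using $\binom{|S|-1}{2}$ in place of $\binom{|S|}{2}$ for the collinear count so that the subtraction telescopes cleanly: $\frac{|S|-2}{3}\binom{|S|}{2} - \frac{c(|S|-1)}{3}\binom{|S|-1}{2} \ge \frac{1}{3}(1-c)|S|\binom{|S|}{2} - O(|S|^2)$, and since the lemma as stated is used only up to lower-order terms in the main theorem, the stated clean bound follows (or one simply notes $\binom{|S|-1}{2}\le\binom{|S|}{2}$ and $|S|-2 \ge (1-c)|S|$ is false in general, so the bookkeeping must track the $k_i\le c|S|$ cap tightly).

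The main obstacle is the bookkeeping in this last step: getting exactly the coefficient $\frac{1}{3}(1-c)$ rather than something like $\frac{1}{3}(1-c) - o(1)$ requires being precise about the overcounting factor and about whether the cap is $c|S|$ or $c|S|-1$ on each line. The combinatorial heart — convexity of $\binom{\cdot}{2}$ forcing the worst case to be blocks saturating the cap — is routine; the delicate part is arranging the constants so the final inequality is an identity-level bound rather than merely asymptotic. I would resolve this by carrying the exact binomial coefficients through and only at the very end discarding terms that are genuinely of lower order, checking that no $c$-dependence hides in them.
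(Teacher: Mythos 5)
Your approach is genuinely different from the paper's. The paper counts non-collinear triples \emph{directly}: writing $p_\ell$ for the number of points of $S$ on a line $\ell$, the number of non-collinear triples is exactly $\frac{1}{3}\sum_{\ell}(|S|-p_\ell)\binom{p_\ell}{2}$ (each non-collinear triple is counted once for each of its three sides), and since $\sum_\ell\binom{p_\ell}{2}=\binom{|S|}{2}$ and $|S|-p_\ell\ge(1-c)|S|$, the bound drops out in one line. The advantage is that the factor $(1-c)|S|$ appears multiplicatively inside the sum, so there is never a cancellation of two large quantities and hence no delicate bookkeeping. Your complementary route --- bound the collinear triples and subtract from $\binom{|S|}{3}$ --- is also viable, but it forces exactly the constant-chasing you flag at the end, and as written your argument only establishes $\frac{1}{3}(1-c)|S|\binom{|S|}{2}-O(|S|^2)$, not the stated bound.

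That said, the gap closes if you use the cap you already derived. A line through $s$ has at most $c|S|$ points of $S$, so each group size satisfies $k_i\le c|S|-1$, hence $k_i-1\le c|S|-2$ and
\[
\sum_i\binom{k_i}{2}=\tfrac{1}{2}\sum_i k_i(k_i-1)\le\tfrac{1}{2}(c|S|-2)(|S|-1).
\]
Summing over $s$ and dividing by $3$, the number of collinear triples is at most $\frac{|S|(|S|-1)(c|S|-2)}{6}=\frac{c|S|-2}{3}\binom{|S|}{2}$. Since $\binom{|S|}{3}=\frac{|S|-2}{3}\binom{|S|}{2}$, the number of non-collinear triples is at least
\[
\frac{|S|-2}{3}\binom{|S|}{2}-\frac{c|S|-2}{3}\binom{|S|}{2}=\frac{1}{3}(1-c)|S|\binom{|S|}{2},
\]
with the two $-2$'s cancelling exactly. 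So the identity-level bound is recoverable; you lost it only by relaxing $k_i-1\le c|S|-2$ to $c|S|-1$ and then to $c|S|$. (The convexity discussion is unnecessary: the linear bound $k_i-1\le c|S|-2$ is all that is used.) For the application in Theorem \ref{thm:main} the $O(|S|^2)$ loss would in fact be harmless, but the lemma as stated requires the exact constant, and your proposal explicitly leaves that step unresolved.
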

\begin{proof}
Note that we are considering \emph{unordered} triples.
Let $L$ be the set of lines determined by $S$.
We denote the number of points of $S$ on $\ell$ by $p_\ell$.
Then we have $\sum_{\ell\in L}\binom{p_\ell}{2}=\binom{|S|}{2}$.
The number of non-collinear triples equals $\frac{1}{3}\sum_{\ell\in L}(|S|-p_\ell)\binom{p_\ell}{2}$, since given a line with $p_\ell$ points, we get a triangle for each pair of points from the line together with each point off the line.
By the assumption in the hypothesis, there are at most $c|S|$ points on a line,
so $|S|-p_\ell\geq(1-c)|S|$ for each $\ell\in L$.
Thus we have
\[\frac{1}{3}\sum_{\ell\in L}(|S|-p_\ell)\binom{p_\ell}{2}
\geq\frac{1}{3}(1-c)|S| \sum_{\ell\in L}\binom{p_\ell}{2}
=\frac{1}{3}(1-c)|S|\binom{|S|}{2}, \]
which proves the lemma.
\end{proof}

A weaker version of the lemma could be deduced from a theorem of Beck \cite{B}, 
which states that any point set has $\Omega(n)$ points on a line or determines $\Omega(n^2)$ lines.
In both cases it follows that there are $\Omega(n^3)$ non-collinear triples.
 However, the simple and self-contained proof above  provides a better dependence on the constant than \cite{B}.

We restate our main  theorem for the convenience of the reader.

\begin{reptheorem}{thm:main}
Let $0<c<1$.
Let $S\subset \R^2$ be a finite set that is not contained in a conic, and that has at most $c|S|$ points on a line.
Then $S$ determines $\Omega_c(|S|^4)$ ordinary conics.
\end{reptheorem}
\begin{proof}
Let $p,q,r$ be a non-collinear triple from $S$ and set $P = \overline{V(p)V(q)V(r)}\subset \R^5$.
By Corollary \ref{cor:twoflats} and
Remark \ref{rem:hyperprojection}, there is a plane $Q\subset \R^5$ and a well-defined projection $\pi:V(S)\to  Q$,
which by Lemma \ref{lem:injective} is injective on $V(S\backslash \Delta_{pqr})$.
Also, by Lemma \ref{lem:injective}, there are $\alpha_1,\alpha_2,\alpha_3\in Q$ such that $\pi(V\left(\overline{pq}\right))=\alpha_1$, $\pi(V\left(\overline{pr}\right))=\alpha_2$, and $\pi(V\left(\overline{qr}\right))=\alpha_3$.
Set
\[A_{pqr} = S\cap \Delta_{pqr}~~~\text{and}~~~B_{pqr}=\pi(V(S\backslash\Delta_{pqr}))=\pi(V(S))\backslash\{\alpha_1,\alpha_2,\alpha_3\},\]
so that we have $|A_{pqr}|+|B_{pqr}|=|S|$, using the fact that the composition of $V$ and $\pi$ is injective on $S\backslash \Delta_{pqr}$.

Suppose that a non-collinear triple $p,q,r$ has the property that $B_{pqr}$ is not collinear.
By Lemma \ref{lem:avoidset}, there are constants $L,M$ such that the following holds:
If $|B_{pqr}|\geq L$, then there are at least $\frac{1}{2}|B_{pqr}|-M$ ordinary lines for $B_{pqr}$ that avoid $\alpha_1,\alpha_2,\alpha_3$.
Let $\overline{\sigma\tau}$ be such an ordinary line, with $\sigma,\tau\in B_{pqr}$.
By the injectivity of $\pi$ on $V(S\backslash\Delta_{pqr})$,
there exist unique points $s,t\in S$ such that $\pi(V(s)) = \sigma$ and $\pi(V(t)) = \tau$.
Then $\overline{V(p)V(q)V(r)V(s)V(t)}$ is an ordinary hyperplane for $V(S)$, and $p,q,r,s,t$ determine an ordinary conic for $S$.
Therefore, given a non-collinear triple $p,q,r$ with $B_{pqr}$ not collinear and $|B_{pqr}|\geq L$, we get at least $\frac{1}{2}|B_{pqr}|-M$ ordinary conics for $S$ that contain $p,q,r$.

To be able to apply the argument above, we have to show that enough non-collinear triples $p,q,r$ have  $B_{pqr}$ non-collinear and $|B_{pqr}|$ large.
 We distinguish two cases: either $|B_{pqr}|\ge (1-c)^2|S|$ for every non-collinear triple $p,q,r$ from $S$,
 or there is a non-collinear triple $p_1,q_1,r_1$ such that $|A_{p_1q_1r_1}|\ge (1-(1-c)^2)|S| = (2c-c^2)|S|$.

\bigskip

\noindent {\bf Case 1:} $|B_{pqr}|\ge (1-c)^2|S|$ for all non-collinear $p,q,r$.\\
If $B_{pqr}$ is collinear,
then $V(S\backslash \Delta_{pqr})$ lies on a $4$-flat in $\R^5$ together with $V(p),V(q),V(r)$,
so $(S\backslash \Delta_{pqr})\cup \{p,q,r\}$ is contained in a conic; denote this conic by $C_{pqr}$.
Then $S$ is contained in the union of $C_{pqr}$ and $\Delta_{pqr}$, with $p,q,r\in C_{pqr}$.
Note that if $C_{pqr}$ is reducible, then one of its lines must be part of $\Delta_{pqr}$; in this case we let $C_{pqr}$ denote the other line, so that we can say that $C_{pqr}$ is an irreducible curve of degree one or two.
We show that there are relatively few triples for which $S\subset C_{pqr}\cup \Delta_{pqr}$ can hold.

Let $p_0,q_0,r_0$ be one such triple (see Figure \ref{fig:fig3}).
 Since the set $S$ is by assumption not contained in a conic, there is at least one point of $S$ not in $C_{p_0q_0r_0}$.
 We fix one such point $u\in S\backslash C_{p_0q_0r_0}$ and assume without loss of generality that $u\in \overline{p_0r_0}$.
Note that $|S\cap C_{p_0q_0r_0}| = |B_{pqr}| \ge 5$.
Suppose $p_0,q_0,r$ is another triple for which $B_{p_0q_0r}$ is collinear and $S\subset C_{p_0q_0r}\cup \Delta_{p_0q_0r}$.
We must have $S\cap C_{p_0q_0r_0}\subset C_{p_0q_0r}$, so $|C_{p_0q_0r}\cap C_{p_0q_0r_0}|\ge |S\cap C_{p_0q_0r_0}| \ge 5$.
By B\'ezout's inequality and the fact that the curves are irreducible (of degree one or two),
this implies that $C_{p_0q_0r}= C_{p_0q_0r_0}$, which gives $r\in C_{p_0q_0r_0}$.
Moreover, we must have $u\in \Delta_{p_0q_0r}$,
so $u$ must lie on the line $\overline{p_0r}$ or the line $\overline{q_0r}$.
Conversely, this means $r$ must lie on $\overline{up_0}$ or $\overline{uq_0}$.
But each of these lines intersects $C_{p_0q_0r_0}$ in at most one point apart from $p_0,q_0$,
so there are only two possibilities for $r$.
Therefore, for any choice of $p_0,q_0$, there are at most three choices of $r$ so that we get a triple as above.
This means that there are at most $2|S|^2$ triples $p,q,r$ for which $B_{pqr}$ is collinear.

\begin{figure}[ht]
\centering
\includegraphics[height=2.5in]{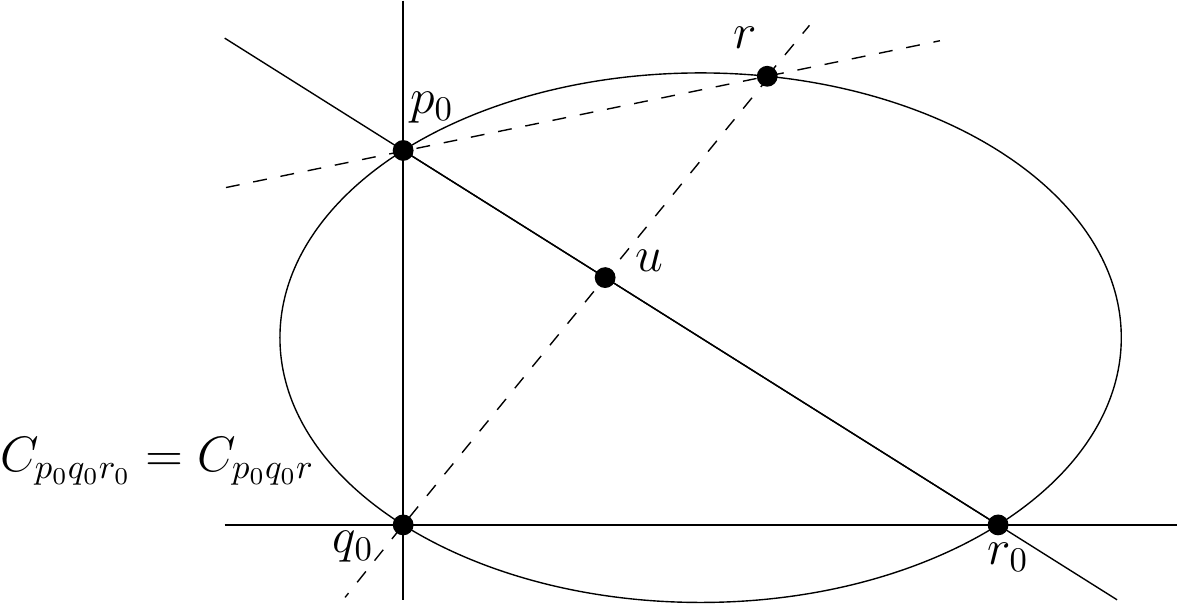}
 \caption{The argument in Case 1.}\label{fig:fig3}
\end{figure}

By Lemma \ref{lem:noncollinear},
there are $\frac{1}{6}(1-c)(|S|^3-|S|^2)$ non-collinear triples $p,q,r$ from $S$,
and for at most $2|S|^2$ of these $B_{pqr}$ is collinear.
For all other non-collinear triples we obtain at least $\frac{1}{2}(1-c)^2|S|-M$ ordinary conics containing $p,q,r$ (for this we also need $|B_{pqr}|\geq L$, but we can assume that $|S|$ is sufficiently large so that $(1-c)^2|S| \ge L$).
Any conic occurs $\binom{5}{3}$ times, once for every triple out of its five points.
Altogether, there are at least
\[\frac{1}{\binom{5}{3}}\left(\frac{1}{2}(1-c)^2|S|\right)\cdot \left(\frac{1}{6}(1-c)|S|^3\right) - O_c(|S|^3)= \frac{1}{120}(1-c)^3|S|^4 - O_c(|S|^3)\]
ordinary conics for $S$ in Case 1.

\bigskip

\noindent {\bf Case 2:}
$|S\cap \Delta_{p_1q_1r_1}|\ge (2c-c^2)|S|$ for some $p_1,q_1,r_1$.\\
We show that in this case we can use the specific structure of $\Delta_{p_1q_1r_1}$ to choose many non-collinear $p,q,r$ such that $B_{pqr}$ is reasonably large and not collinear (see Figure \ref{fig:fig4}).
If we relabel the points so that $|\overline{p_1q_1}\cap S|\geq |\overline{p_1r_1}\cap S|\geq |\overline{q_1r_1}\cap S|$,
then $\overline{p_1q_1}$ and $\overline{p_1r_1}$ must each contain at least $\frac{1}{2}(c-c^2)|S|$ points of $S$.
Indeed, $\overline{p_1q_1}$ contains at most $c|S|$ points, which implies that $\overline{p_1r_1}$ and $\overline{q_1r_1}$ together contain at least $(2c-c^2)|S| - c|S| = (c-c^2)|S|$ points of $S$, at least half of which must lie on $\overline{p_1r_1}$.
Let $L_{p_1q_1}=(\overline{p_1q_1}\cap S)\backslash \{p_1,q_1\}$ and $L_{p_1r_1}=(\overline{p_1r_1}\cap S)\backslash\{p_1,r_1\}$, so we have $|L_{p_1q_1}|,|L_{p_1r_1}|\ge\frac{1}{2}(c-c^2) |S|-2$.

\begin{figure}[ht]
\centering
\includegraphics[height=2.5in]{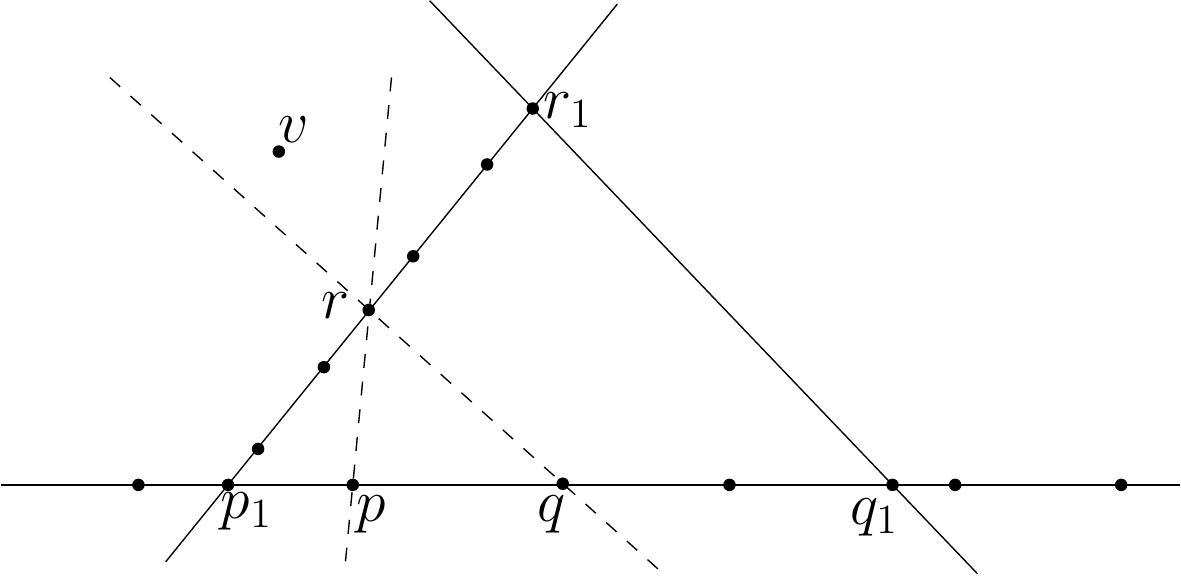}
 \caption{The argument in Case 2.}\label{fig:fig4}
\end{figure}

Since $S$ is not contained in a conic,
we can fix  $v\in S$ outside $\overline{p_1q_1}\cup \overline{p_1r_1}$ (see Figure \ref{fig:fig4}).
For any choice of $p,q\in L_{p_1q_1}$ and $r\in L_{p_1r_1}$,
the triple $p,q,r$ is not collinear,
and
\begin{align}\label{eq:Bpqr}
|B_{pqr}|\geq |L_{p_1r_1}| \geq \frac{1}{2}(c-c^2)|S|-2.
\end{align}
We will choose $p,q\in L_{p_1q_1}$ and $r\in L_{p_1r_1}$ so that $v\notin \Delta_{pqr}$.
We claim that for such a triple, $B_{pqr}$ is not collinear.
Otherwise, $v$ and $L_{p_1r_1}$ together with $p,q,r$ would lie on a conic $C$ (as in Case 1).
This is impossible,
because $|L_{p_1r_1}|\ge 3$ implies that $C$ contains
$\overline{p_1r_1}$,
and that the other points on $C$ are  collinear.
But $p,q,v$ are not collinear, a contradiction.

We now count the triples $p,q,r$ with  $p,q\in L_{p_1q_1}$, $r\in L_{p_1r_1}$, and $v\notin \Delta_{pqr}$.
For every $r\in L_{p_1r_1}$, there is at most one $p\in L_{p_1q_1}$ such  that $v\in \overline{pr}$, so there are at most $|L_{p_1q_1}|$ pairs $p,q\in L_{p_1q_1}$ such that $v\in\Delta_{pqr}$.
Hence there are at most $|L_{p_1q_1}|\cdot|L_{p_1r_1}|$ triples $p,q,r$ with $p,q\in L_{p_1q_1}$,
and $r\in L_{p_1r_1}$ such that $u\in\Delta_{pqr}$.
Therefore, the number of remaining triples is at least
\[|L_{p_1r_1}|\cdot \binom{|L_{p_1q_1}|}{2}-|L_{p_1q_1}|\cdot|L_{p_1r_1}| = \frac{1}{2}|L_{p_1q_1}|\cdot |L_{p_1r_1}|\cdot \left(|L_{p_1q_1}| -5\right) \]
\[\ge \frac{1}{2}\left(\frac{1}{2}(c-c^2) |S|-2\right)^3 - O_c(|S|^2)
= \frac{1}{16}c^3(1-c)^3|S|^3 -O_c(|S|^2).\]
For every such triple we obtain $\frac{1}{2}|B_{pqr}|-M \geq \frac{1}{4}(c-c^2)|S|-M-1$ ordinary conics (using \eqref{eq:Bpqr}, and assuming $|S|$ is sufficiently large).
Accounting for the multiplicity $\binom{5}{3}$, we obtain the lower bound
\[\frac{1}{640}c^4(1-c)^4|S|^4-O_c(|S|^3)\]
on the number of ordinary conics determined by $S$, which completes the proof.
\end{proof}

\section{A construction with few ordinary conics}\label{sec:construction}

An \emph{elliptic curve} is a cubic that is irreducible and nonsingular.
On an elliptic curve $E$ we have an operation $\oplus$ and an identity element $\mathcal{O}$ which make the set of points of $E$ into a group.
See \cite{GT} for an introduction to this group law.
The key property is that three points $p,q,r\in E$ are collinear if and only if $p\oplus q\oplus r= \mathcal{O}$.
Note that $p\oplus p\oplus q=\mathcal{O}$ should be interpreted as $p$ and $q$ lying on a line that is tangent to $E$ at $p$.

The following fact\footnote{The third author thanks Mehdi Makhul and Josef Schicho for making him aware of this fact.} can be found in \cite[Theorem 9.2]{W}.
We give our own proof, because below we will need a version for reducible cubics.


\begin{lemma}\label{lem:coconiconelliptic}
Let $E$ be an elliptic curve in $\R^2$.
Then six distinct points $p,q,r,s,t,u\in E$ are co-conic if and only if $p\oplus q\oplus r\oplus s\oplus t\oplus u= \mathcal{O}$.
\end{lemma}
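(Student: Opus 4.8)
The plan is to use the standard correspondence between conics through five points and the group law on the cubic, the same way one proves that three points sum to $\mathcal{O}$ iff they are collinear. Concretely, I would realize the conic as an intersection of the elliptic curve with another cubic, using the fact that a conic $C$ and a line $\ell$ together form a (reducible) cubic $C\cup\ell$, and then track what B\'ezout's theorem says about the nine intersection points of this cubic with $E$.

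First I would fix a line $\ell$ meeting $E$ in three points $a,b,c$ (so $a\oplus b\oplus c=\mathcal{O}$); such a line exists, and generically these three points are distinct and distinct from $p,q,r,s,t,u$. Suppose $p,q,r,s,t,u$ lie on a conic $C$. Then $C\cup\ell$ is a cubic curve meeting $E$ in the nine points $p,q,r,s,t,u,a,b,c$ (counted with multiplicity $9 = 3\cdot 2$ by B\'ezout, and I should check the intersection is transverse / handle multiplicities, or just choose $\ell$ generically so that it is). Now I invoke the group-theoretic form of the Cayley–Bacharach / addition theorem: if a cubic $D$ meets $E$ in nine points $x_1,\dots,x_9$, then $x_1\oplus\cdots\oplus x_9=\mathcal{O}$. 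Applying this to $D=C\cup\ell$ gives $p\oplus q\oplus r\oplus s\oplus t\oplus u\oplus a\oplus b\oplus c=\mathcal{O}$, and since $a\oplus b\oplus c=\mathcal{O}$ we conclude $p\oplus q\oplus r\oplus s\oplus t\oplus u=\mathcal{O}$.

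For the converse, suppose $p\oplus q\oplus r\oplus s\oplus t\oplus u=\mathcal{O}$. Five points in general position (here no three on a line can be assumed, or handled by B\'ezout: since the six points lie on the irreducible cubic $E$, at most three are on any line) determine a unique conic $C$ through $p,q,r,s,t$. By the forward direction applied to $p,q,r,s,t$ and the sixth intersection point $u'$ of $C$ with $E$ — where I again write $C\cup\ell$ as a cubic through $p,q,r,s,t,u',a,b,c$ — the addition theorem forces $p\oplus q\oplus r\oplus s\oplus t\oplus u'=\mathcal{O}$, hence $u'=u$ by cancellation in the group, so $u\in C$ and the six points are co-conic. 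I should note that $C$ really does meet $E$ in a sixth point (B\'ezout gives six intersection points counted with multiplicity; the five given ones are simple if chosen generically, else I argue with multiplicities).

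The main obstacle is rigor about B\'ezout multiplicities: I need that $C$ meets $E$ in exactly six points counted properly and that $\ell$ contributes cleanly, so that the nine-point count for the reducible cubic $C\cup\ell$ is valid. One clean way around this is to avoid the auxiliary line altogether by instead using the group law directly: parametrize $E$ by $\mathbb{R}/\Lambda$ (or the relevant one-parameter group), note that a conic pulls back to the zero set of an elliptic function with a pole of order $\le 6$ at infinity, hence exactly six zeros summing (via the Abel–Jacobi relation, which is exactly the statement that zeros minus poles sum to $\mathcal{O}$) to $\mathcal{O}$. Either route works; I expect the paper wants the elementary cubic-intersection argument since it explicitly says it wants a version that will generalize to reducible cubics, so I would present the $C\cup\ell$ argument and simply invoke the group-law form of the nine-point collinearity theorem, citing it as the natural extension of the three-point rule stated just before the lemma.
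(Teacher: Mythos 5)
Your converse direction is fine and close in spirit to the paper's (take the unique conic through five of the points, then force the sixth intersection point with $E$ to coincide with $u$). The problem is in the forward direction: the statement you invoke --- that any cubic meeting $E$ in nine points has those nine points summing to $\mathcal{O}$ --- is doing all the work, and it is not ``the natural extension of the three-point rule'' in any sense that lets you simply cite it. Applied to your cubic $C\cup\ell$, that statement already contains the lemma you are trying to prove: subtract the line's contribution $a\oplus b\oplus c=\mathcal{O}$ and what remains is exactly the conclusion. So invoking it begs the question unless you prove it, and deriving it from the three-point collinearity rule takes an argument of essentially the same difficulty as the lemma itself. Your Abel--Jacobi fallback is a legitimate alternative proof, but it imports the analytic theory the paper deliberately avoids, and --- as you yourself anticipate --- it would not transfer to the quasi-group law on the reducible cubic $C\cup L$ needed for Lemma \ref{lem:coconiconreducible}.

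The missing idea is the choice of auxiliary cubic. Instead of $C\cup\ell$ for a generic line $\ell$, the paper takes the union of the three lines $\overline{pq}$, $\overline{rs}$, $\overline{tu}$ through pairs of the six given points. This cubic meets $E$ in the nine points $p,q,r,s,t,u,\ominus p\ominus q,\ominus r\ominus s,\ominus t\ominus u$, where the three new points are produced by the three-point rule itself, so no nine-point sum theorem is needed. Chasles's theorem is then used only in its elementary ``a cubic through eight of the nine points passes through the ninth'' form: the cubic consisting of $C$ together with the line through $\ominus p\ominus q$ and $\ominus r\ominus s$ passes through eight of the nine points, hence through $\ominus t\ominus u$; since $C$ already carries six points of $E$ and by B\'ezout and the irreducibility of $E$ can carry no more, the point $\ominus t\ominus u$ must lie on that line. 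Thus $\ominus p\ominus q,\ominus r\ominus s,\ominus t\ominus u$ are collinear, and one application of the three-point rule yields the sum formula. Everything reduces to the three-point rule plus Chasles, which is precisely what survives the passage to reducible cubics in Lemma \ref{lem:coconiconreducible}.
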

\begin{proof}
We use Chasles's theorem (also known as the Cayley-Bacharach theorem; see \cite{GT}), which states that given two cubics that intersect in nine distinct points, any cubic that passes through eight of these points also passes through the ninth.

Let $p,q,r,s,t,u\in E$ be contained in a conic $C$.
The three lines $\overline{pq}, \overline{rs}, \overline{tu}$ form a cubic that intersects $E$ in the nine points $p,q,r,s,t,u, \ominus p\ominus q,\ominus r\ominus s, \ominus t\ominus u$.
These points need not be distinct,
but we can assume that they are by perturbing them and afterwards using continuity.
Hence Chasles's theorem applies to these nine points.
The conic $C$ together with the line through $\ominus p\ominus q$ and $\ominus r\ominus s$ forms a cubic, which by Chasles must also pass through $\ominus t\ominus u$.
Since $C$ already has six points of $E$, and cannot contain more by B\'ezout and the fact that $E$ is irreducible,
$\ominus t\ominus u$ must lie on the line through $\ominus p\ominus q$ and $\ominus r\ominus s$.
So $\ominus p\ominus q,\ominus r\ominus s, \ominus t\ominus u$ are collinear, and we have
\begin{align}\label{eq:sumofthree}
p\oplus q\oplus r\oplus s\oplus t\oplus u = \ominus\left((\ominus p\ominus q) \oplus (\ominus r\ominus s)\oplus (\ominus t\ominus u)\right) = \mathcal{O}.
\end{align}

Conversely, suppose that $p\oplus q\oplus r\oplus s\oplus t\oplus u= \mathcal{O}$, which implies that $\ominus p\ominus q,\ominus r\ominus s, \ominus t\ominus u$ are on a line $L$.
Let $C$ be the conic determined by $p,q,r,s,t$.
Then $C\cup L$ is a cubic that passes through eight of the points $p,q,r,s,t,u, \ominus p\ominus q,\ominus r\ominus s, \ominus t\ominus u$,
so by Chasles we have $u\in C\cup L$.
Since $L$ already contains three points of $E$, we must have $u\in C$, so $p,q,r,s,t,u$ are co-conic.
\end{proof}

\begin{corollary}\label{cor:coconiconelliptic}
Let $E$ be an elliptic curve in $\R^2$.
Five distinct points $p,q,r,s,t\in E$ satisfy $p\oplus p\oplus q\oplus r\oplus s\oplus t = \mathcal{O}$ if and only if they lie on a conic $C$, with $C$ tangent to $E$ at $p$.
\end{corollary}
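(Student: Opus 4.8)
The plan is to deduce Corollary \ref{cor:coconiconelliptic} from Lemma \ref{lem:coconiconelliptic} by a limiting argument, treating the tangency condition as the degenerate case $u\to p$. First I would recall that in the group law on $E$, the condition $p\oplus p\oplus q\oplus r\oplus s\oplus t=\mathcal O$ is exactly the $u=p$ specialization of $p\oplus q\oplus r\oplus s\oplus t\oplus u=\mathcal O$, and that a conic through five points $p,q,r,s,t$ is uniquely determined (generically), so there is a well-defined conic $C$ attached to the left-hand side of the equivalence. The geometric content to pin down is that the limiting position of the conic through $p,q,r,s,u$ as $u$ slides along $E$ toward $p$ is the conic through $p,q,r,s,t$ that is tangent to $E$ at $p$ — in other words, passing through $p$ "twice" along $E$ means being tangent to $E$ there.

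Concretely, I would argue as follows. Suppose $p\oplus p\oplus q\oplus r\oplus s\oplus t=\mathcal O$. Pick a sequence of points $u_n\in E$ with $u_n\to p$ and $u_n\neq p$; along $E$ we can arrange $p\oplus q\oplus r\oplus s\oplus t\oplus u_n'=\mathcal O$ by instead perturbing one of the other points, say replacing $t$ by $t_n$ with $t_n\to t$, so that the six points $p,q,r,s,t_n,u_n$ satisfy the hypothesis of Lemma \ref{lem:coconiconelliptic} and hence lie on a conic $C_n$. Since $C_n$ is determined by five of these points (for $n$ large the six points are distinct and not co-conic with any sixth forced coincidence, so uniqueness of the conic through five of them applies), and the coefficients of $C_n$ depend continuously on the points, $C_n$ converges to the conic $C$ through $p,q,r,s,t$. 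Now $C_n$ passes through both $p$ and $u_n$, two points of $E$ approaching each other; taking the limit, the secant line $\overline{pu_n}$ of $E$ tends to the tangent line of $E$ at $p$, and the same limiting statement forces $C$ to meet $E$ at $p$ with multiplicity at least two, i.e. $C$ is tangent to $E$ at $p$. For the converse, if $p,q,r,s,t$ lie on a conic $C$ tangent to $E$ at $p$, run the same perturbation in reverse: tangency means $C$ is the limit of conics through $p,q,r,s,u_n$ with $u_n\to p$ along $E$, and by Lemma \ref{lem:coconiconelliptic} (after a matching perturbation of $t$ to keep six distinct points) these satisfy $p\oplus q\oplus r\oplus s\oplus t_n\oplus u_n=\mathcal O$; passing to the limit and using continuity of $\oplus$ gives $p\oplus p\oplus q\oplus r\oplus s\oplus t=\mathcal O$.

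The main obstacle I anticipate is making the continuity/limiting argument fully rigorous: one has to be careful that the conic through five points is genuinely unique and varies continuously (which can fail if the five points degenerate, e.g. become co-conic with extra coincidences or four become collinear), and one has to justify that "the limit of conics through $p$ and $u_n$ with $u_n\to p$ along $E$ is tangent to $E$ at $p$" — this is the statement that intersection multiplicity is semicontinuous / that a secant tends to a tangent, which is standard but should be invoked cleanly rather than recomputed. An alternative, perhaps cleaner, route is purely algebraic: differentiate the collinearity/co-conic relations, or work directly with the Weierstrass parametrization where the group law is addition of parameters and "co-conic" becomes a linear condition on the divisor, so that $p\oplus p\oplus q\oplus r\oplus s\oplus t=\mathcal O$ literally is the condition that the divisor $2p+q+r+s+t$ is cut out by a conic, and $2p$ appearing in the divisor is by definition tangency at $p$. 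If the surrounding paper prefers to avoid the parametrization machinery, the perturbation argument above, leaning on Lemma \ref{lem:coconiconelliptic} as a black box exactly as that lemma itself leans on Chasles, is the natural choice.
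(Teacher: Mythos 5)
Your argument is essentially identical to the paper's proof, which also deduces the corollary from Lemma \ref{lem:coconiconelliptic} by approximating the five points with six distinct co-conic points on $E$ (two of them approaching $p$) and passing to the limit using continuity of the group operation and the fact that the limiting conic is tangent to $E$ at $p$. Your version simply spells out the perturbation and secant-to-tangent details that the paper leaves implicit, so it is correct and takes the same route.
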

\begin{proof}
This follows from Lemma \ref{lem:coconiconelliptic} by continuity.
We can approximate the five points with six co-conic points on $E$, with two of them approaching $p$.
Then the conic containing the six points approaches a conic tangent to $E$ at $p$.
The fact that the group operation is continuous implies the statement.
\end{proof}

Just like the fact that $p,q,r$ are collinear if and only if $p\oplus q\oplus r= \mathcal{O}$ leads to constructions with few ordinary lines (see \cite{GT}),
Lemma \ref{lem:coconiconelliptic} lets us construct point sets with few ordinary conics.
This is the first part of Theorem \ref{thm:construction}.

\begin{theorem}\label{thm:construction1}
There exist arbitrarily large finite sets $G\subset \R^2$, not contained in a conic and with no four on a line, that determine at most $\frac{1}{24}|G|^4+O(|G|^3)$ ordinary conics.
\end{theorem}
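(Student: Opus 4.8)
The plan is to take $G$ to be a finite subgroup-like subset of an elliptic curve $E$, specifically a set of the form $G = \{ p \in E : n \cdot p = \mathcal{O} \}$ for the $n$-torsion, or—more flexibly, to get arbitrarily large sizes with the right parity—a coset-union of a cyclic subgroup of order $|G|$ inside $E(\R)$. Concretely I would fix an elliptic curve whose real points contain a cyclic subgroup $H$ of order $n$ and set $G = H$ (choosing $n$ as needed). Two things must be checked about such a $G$: it is not contained in a conic (true once $|G| \ge 7$, since a conic meets $E$ in at most six points by B\'ezout and irreducibility of $E$), and it has no four points on a line (true once $|G| \ge 4$ in the same way, since a line meets $E$ in at most three points). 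The whole point of choosing $G$ inside a group is that Lemma~\ref{lem:coconiconelliptic} and Corollary~\ref{cor:coconiconelliptic} translate the co-conic condition into a purely additive one.

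Next I would count co-conic quintuples. An ordinary conic for $G$ is determined by five points of $G$ that lie on a unique conic; since $G \subset E$ and $|G|$ is large, \emph{every} conic through five points of $G$ contains exactly those five points of $G$ (a sixth would violate B\'ezout), so the uniqueness is automatic and an ordinary conic is simply a quintuple $\{p_1,\dots,p_5\} \subset G$ with no sixth point of $G$ on the spanned conic — but here a subtlety arises, because a quintuple of points of $E$ always determines a conic, and whether that conic is ``ordinary'' hinges on whether a sixth point of $G$ happens to lie on it. By Lemma~\ref{lem:coconiconelliptic}, the conic through $p_1,\dots,p_5$ passes through a sixth point $u \in E$ with $u = \ominus(p_1 \oplus \cdots \oplus p_5)$, so the quintuple fails to be ordinary precisely when that forced sixth point $u$ already lies in $G$ (and is distinct from the five), or when the conic is tangent to $E$ at one of the $p_i$, which by Corollary~\ref{cor:coconiconelliptic} happens when $p_i \oplus (p_1 \oplus \cdots \oplus p_5) = \mathcal{O}$, i.e.\ the ``sixth point'' coincides with one of the five. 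So the quintuples that are \emph{not} ordinary are exactly those with $p_1 \oplus \cdots \oplus p_5 \in \ominus G = G$ (using that $G$ is a subgroup, so closed under inversion).

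Since $G$ is a group of order $|G|$, for a uniformly random ordered quintuple the sum $p_1 \oplus \cdots \oplus p_5$ is essentially equidistributed on $G$ — more precisely, for any fixed target $g$, the number of quintuples summing to $g$ is $|G|^4$ if we allow repeats; restricting to distinct five-tuples changes this by $O(|G|^3)$. Therefore the number of ordered quintuples of distinct points whose sum lies in $G$ is $|G| \cdot |G|^4 \cdot$ wait — that is all of them; I need the complementary count. The correct bookkeeping: the \emph{bad} (non-ordinary) unordered quintuples are those with sum in $G$, and since the sum is always forced to be \emph{some} element, I instead directly count the good ones. A quintuple $\{p_1,\dots,p_5\}$ is bad iff $\ominus(p_1\oplus\cdots\oplus p_5) \in G \setminus \{p_1,\dots,p_5\}$ \emph{or} equals one of the $p_i$ — but $\ominus(p_1\oplus\cdots\oplus p_5)$ always lies in $G$ since $G$ is a subgroup! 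So with $G$ a genuine subgroup \emph{every} quintuple is bad, which is wrong. The fix, and the crux of the construction, is that $G$ must \emph{not} be a full subgroup: one takes $G$ to be a subgroup of order $2m$ minus a carefully chosen single element, or better, a set where $\ominus(p_1 \oplus \cdots \oplus p_5)$ lands outside $G$ for most quintuples. I would instead follow the Green--Tao-style construction: take $G$ to be the set of $2m$-torsion points, or an arithmetic-progression-like set $\{k \cdot p_0 : 1 \le k \le N\}$ on $E$ that is \emph{not} closed under the five-fold sum; then for a random quintuple the forced sixth point lies in $G$ only a $O(1/|G|)$ fraction of the time, so all but $O(|G|^3)$ of the $\binom{|G|}{5}$ quintuples are ordinary — but that gives $\sim \frac{1}{120}|G|^4$, not $\frac{1}{24}$.

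To land on the stated constant $\frac{1}{24}$, the construction must instead arrange that \emph{most} quintuples are ordinary with the count coming out to $\binom{|G|}{5} \cdot (\text{fraction}) \approx \frac{1}{24}|G|^4$; since $\binom{|G|}{5} \sim \frac{1}{120}|G|^5$, we'd need the ordinary fraction to be $\sim \frac{5}{|G|}$, i.e.\ only $O(|G|)$ ordinary conics — contradiction with largeness. So the right reading is that $\frac{1}{24}|G|^4$ is an \emph{upper} bound and the construction makes the ordinary count as \emph{small} as possible: take $G$ a subgroup of order $n$, so that \emph{every} quintuple sums into $G$; then $\{p_1,\dots,p_5\}$ is non-ordinary exactly when its forced sixth point is a \emph{sixth distinct} point of $G$, and it \emph{is} ordinary exactly when the forced sixth point coincides with one of the five (tangency case) — which for fixed $p_1,\dots,p_4$ pins down $p_5$ to $O(1)$ choices. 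Hence the plan: (i) let $G$ be a cyclic subgroup of $E(\R)$ of order $n$; (ii) verify no-four-on-a-line and not-co-conic for $n$ large; (iii) show an ordinary conic corresponds to a quintuple whose group-sum equals $\ominus$(one of its own points), equivalently $2p_i \oplus p_j \oplus p_k \oplus p_\ell \oplus p_m = \mathcal{O}$ for some indexing, using Corollary~\ref{cor:coconiconelliptic}; (iv) count such quintuples: choose the tangency point $p_i$ ($n$ ways), choose three more points freely ($\sim n^3/6$ ways after accounting for order), and the last is determined up to $O(1)$ — total $\sim \frac{1}{24}n^4$ after dividing by the over-count. The main obstacle I anticipate is step (iv)'s careful inclusion–exclusion: making sure each ordinary conic is counted the right number of times (it could be tangent to $E$ at more than one of its points, and the tangency point might coincide under the $\binom{5}{\cdot}$ symmetrization), and confirming that the non-tangent quintuples genuinely all fail to be ordinary because their forced sixth point, lying in the subgroup $G$, is generically a genuine sixth point of $G$. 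Getting the constant exactly $\frac{1}{24}$ rather than an off-by-small-factor will require pinning down these multiplicities precisely.
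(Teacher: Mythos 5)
Your final plan (i)--(iv) is exactly the paper's proof: take a finite subgroup $G$ of an elliptic curve (not co-conic and with no four collinear by B\'ezout), use Lemma \ref{lem:coconiconelliptic} and Corollary \ref{cor:coconiconelliptic} to see that a quintuple is ordinary precisely when its forced sixth point $\ominus(p_1\oplus\cdots\oplus p_5)$, which necessarily lies in $G$, coincides with one of the five (a tangency), and count $|G|\cdot\binom{|G|}{3}$ such quintuples divided by the multiplicity $\binom{4}{3}=4$, giving $\frac{1}{24}|G|^4+O(|G|^3)$. The multiplicity worry you flag in step (iv) resolves itself: the forced sixth point is a single group element and so can equal at most one of the five distinct points, hence the tangency point of each ordinary conic is unique and each conic is counted exactly four times, with the remaining degeneracies (e.g.\ the determined fifth point colliding with one of the chosen points) absorbed into the $O(|G|^3)$ term.
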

\begin{proof}
We can choose an elliptic curve $E$ and a finite subgroup $G\subset E$ of any size (see \cite{GT}).
If $C$ is an ordinary conic for $G$, containing the five points $p,q,r,s,t$, then the point $-(p\oplus q\oplus r\oplus s\oplus t)$ must also be in $G$, by the definition of a subgroup, and it must be on $C$ by Lemma \ref{lem:coconiconelliptic}.
Since $C$ is ordinary, this point $-(p\oplus q\oplus r\oplus s\oplus t)$ must equal one of the five points, and $C$ must be tangent to $E$ at that point.
Conversely, given five points $p,q,r,s,t\in G$ such that $-(p\oplus q\oplus r\oplus s\oplus t)\in \{p,q,r,s,t\}$,
they must determine an ordinary conic for $G$ that is tangent to $E$ at $-(p\oplus q\oplus r\oplus s\oplus t)$.
Therefore, the ordinary conics for $G$ correspond exactly to those five-tuples $p,q,r,s,t$ such that $-(p\oplus q\oplus r\oplus s\oplus t)\in \{p,q,r,s,t\}$.

Choose one point $p$ from $G$, and three other points $q,r,s$.
Then $t = -(p\oplus p\oplus q\oplus r\oplus s)$ is the unique point such that $p\oplus p\oplus q\oplus r\oplus s\oplus t= \mathcal{O}$.
On the other hand, for any ordinary conic for $G$,
$p$ is uniquely determined as the point where $C$ is tangent to $E$, while there are $\binom{4}{3}$ ways to choose $q,r,s$.
Thus the number of ordinary conics determined by $G$ equals
$\frac{1}{4}|G|\cdot \binom{|G|}{3} =\frac{1}{24}|G|^4 + O(|G|^3)$.
\end{proof}

We now move towards the second part of Theorem \ref{thm:construction},
which uses a similar construction on a reducible cubic.
As explained in \cite{GT}, one can define a ``quasi-group law'' on reducible cubics; see \cite[Section 7]{GT} for details.
It does not quite make the set of points of the cubic into a group, but it comes close enough to allow constructions like in Theorem \ref{thm:construction1}.
We consider only the case of a reducible cubic that is the union of an irreducible conic $C$ and a disjoint line $L$.
By \cite[Proposition 7.3]{GT}, there are bijective maps $\psi_C:\mathbb{R}\slash\mathbb{Z}\to C, \psi_L:\mathbb{R}\slash\mathbb{Z}\to L$ such that $\psi_C(a),\psi_C(b), \psi_L(c)$ are collinear if and only if $a+b+c=0$.
We have the following equivalent of Lemma \ref{lem:coconiconelliptic}.

\begin{lemma}\label{lem:coconiconreducible}
Let $C$ be an irreducible conic and $L$ a disjoint line in $\R^2$.
Let $p,q\in L$ and $r,s,t,u\in C$ be distinct  points.
Then $p,q,r,s,t,u\in C\cup L$ are co-conic if and only if
\[\psi_L^{-1}(p)+\psi_L^{-1}(q) + \psi_C^{-1}(r)+ \psi_C^{-1}(s)+ \psi_C^{-1}(t)+ \psi_C^{-1}(u) =0.\]
The same holds when two of the six points coincide, with the conic containing all five points being tangent to $C$ or $L$ at the repeated point.
\end{lemma}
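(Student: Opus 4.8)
The plan is to mimic the proof of Lemma \ref{lem:coconiconelliptic}, replacing the elliptic curve by the reducible cubic $C\cup L$ and keeping Chasles's theorem as the only real tool; the proof of Lemma \ref{lem:coconiconelliptic} was written in a self-contained way precisely so that it can be adapted here. The one structural change is that, of the six points, two lie on $L$ and four lie on $C$, so we cannot pair up the two $L$-points with a secant line: the line through $p$ and $q$ \emph{is} $L$, a component of the cubic. Instead I would pair each of $p,q$ with a point of $C$, and pair the two remaining points of $C$ with each other. Write $p=\psi_L(\alpha)$, $q=\psi_L(\beta)$, $r=\psi_C(\gamma)$, $s=\psi_C(\delta)$, $t=\psi_C(\epsilon)$, $u=\psi_C(\zeta)$, and form the reducible cubic $\Gamma=\overline{pr}\cup\overline{qs}\cup\overline{tu}$.

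First I would record the third intersection point of each of these three lines with $C\cup L$. Since $C$ is irreducible and disjoint from $L$, no line is a component of $C$ and none of $\overline{pr},\overline{qs},\overline{tu}$ equals $L$, so $\Gamma$ shares no component with $C\cup L$, and each of the three lines meets $C\cup L$ in three points (with multiplicity, and distinct for a generic configuration). Using the defining property that $\psi_C(x),\psi_C(y),\psi_L(z)$ are collinear if and only if $x+y+z=0$: the line $\overline{pr}$ meets $C$ again at $\psi_C(-\gamma-\alpha)$, the line $\overline{qs}$ meets $C$ again at $\psi_C(-\delta-\beta)$, and the line $\overline{tu}$ meets $L$ at $\psi_L(-\epsilon-\zeta)$. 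Thus $\Gamma\cap(C\cup L)$ consists of the nine points $p,q,r,s,t,u,\psi_C(-\gamma-\alpha),\psi_C(-\delta-\beta),\psi_L(-\epsilon-\zeta)$, which are distinct for a generic choice of the six points.

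For the forward direction, suppose $p,q,r,s,t,u$ lie on a conic $C_0$, and let $M$ be the line through $\psi_C(-\gamma-\alpha)$ and $\psi_C(-\delta-\beta)$. Then $C_0\cup M$ is a cubic through eight of the nine points of $\Gamma\cap(C\cup L)$, so by Chasles's theorem it also passes through $\psi_L(-\epsilon-\zeta)$. Now $C_0\neq C$ (as $p\in C_0\setminus C$), so $C_0$ meets the irreducible conic $C$ in at most four points by B\'ezout, forcing $C_0\cap C=\{r,s,t,u\}$; likewise $C_0\cap L=\{p,q\}$. Hence, for a generic configuration (where $\psi_L(-\epsilon-\zeta)\notin\{p,q\}$), the point $\psi_L(-\epsilon-\zeta)$ is not on $C_0$, so it lies on $M$. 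Therefore $\psi_C(-\gamma-\alpha),\psi_C(-\delta-\beta),\psi_L(-\epsilon-\zeta)$ are collinear, and the defining property gives $(-\gamma-\alpha)+(-\delta-\beta)+(-\epsilon-\zeta)=0$, i.e.\ $\alpha+\beta+\gamma+\delta+\epsilon+\zeta=0$, which is the claimed identity $\psi_L^{-1}(p)+\psi_L^{-1}(q)+\psi_C^{-1}(r)+\psi_C^{-1}(s)+\psi_C^{-1}(t)+\psi_C^{-1}(u)=0$. The converse runs the same steps in reverse: assuming the sum vanishes, the three points $\psi_C(-\gamma-\alpha),\psi_C(-\delta-\beta),\psi_L(-\epsilon-\zeta)$ are collinear, on a line $M'$; let $C_0$ be the unique conic through $p,q,r,s,t$ (unique since no four of these five are collinear, two lying on $L$ and three on the irreducible $C$); then $C_0\cup M'$ is a cubic through eight of the nine points, so by Chasles it passes through $u$, and since $u\notin M'$ generically we conclude $u\in C_0$.

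The main obstacle I expect is the bookkeeping of degenerate configurations rather than the geometry: when two of the nine intersection points coincide, when one of $\overline{pr},\overline{qs},\overline{tu}$ is tangent to $C$, when $p$ or $q$ happens to lie on $\overline{tu}$ (so that $\psi_L(-\epsilon-\zeta)\in\{p,q\}$), or when $u$ happens to lie on $M'$. As in Lemma \ref{lem:coconiconelliptic} and Corollary \ref{cor:coconiconelliptic}, the cleanest remedy is to establish the equivalence first on a dense open set of configurations on $C\cup L$ and then extend to all configurations by the continuity of $\psi_C,\psi_L$ and of the co-conic condition; the same continuity argument, with two of the six parameters merged, handles the final assertion about two coinciding points, the limiting conic being tangent to $C$ or $L$ at the repeated point. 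One should also note explicitly that Chasles's theorem as quoted applies to the two \emph{reducible} cubics $\Gamma$ and $C\cup L$, which is legitimate because they share no component.
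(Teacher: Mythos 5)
Your proposal is correct and follows essentially the same route as the paper: the same pairing of lines $\overline{pr},\overline{qs},\overline{tu}$ (so that two of the residual intersection points land on $C$ and one on $L$), the same Chasles-plus-B\'ezout argument to force the ninth point onto the auxiliary line, and the same continuity/perturbation handling of degenerate and tangent configurations. The only step worth making explicit is ruling out $L\subset C_0$ before asserting $C_0\cap L=\{p,q\}$ (the paper does this by noting that the other line of such a $C_0$ would have to contain the four points $r,s,t,u$ of the irreducible conic $C$, which is impossible).
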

\begin{proof}
The proof is essentially the same as that of Lemma \ref{lem:coconiconelliptic}, except that the group operations take place in $G$.
We also need to choose the three lines carefully, and to deal with the reducibility of the cubic.

Let $p,q,r,s,t,u\in C\cup L$ be contained in a conic $C'$.
The three lines $\overline{pr},\overline{qs}, \overline{tu}$, form a cubic that intersects $C\cup L$ in the nine points $p,q,r,s,t,u, \ominus p\ominus q,\ominus r\ominus s, \ominus t\ominus u$.
The conic $C'$ together with the line through $\ominus p\ominus q$ and $\ominus r\ominus s$ forms a cubic, which by Chasles must also pass through $\ominus t\ominus u$.
The conic $C'$ has six points of $C\cup L$, and by B\'ezout it cannot contain more,
unless $C'=C$ or $L\subset C'$.
But $C'=C$ is impossible because $C'$ contains $p,q\not \in C$,
and $L \subset C'$ is impossible because then the other line in $C'$ would contain four points $r,s,t,u$ from the irreducible conic $C$.
We conclude that $\ominus t\ominus u$ lies on the line through $\ominus p\ominus q$ and $\ominus r\ominus s$,
and we can finish as in the proof of Lemma \ref{cor:coconiconelliptic}.
The converse is then straightforward, and the last statement of the lemma follows in the same way as Corollary \ref{cor:coconiconelliptic}.
\end{proof}

We now get a construction much like in Theorem \ref{thm:construction1} (with more awkward counting).

\begin{theorem}
There exist arbitrarily large finite sets $H\subset \R^2$, not contained in a conic and with $|H|/2$ points on a line, that determine at most $\frac{7}{384}|H|^4+O(|H|^3)$ ordinary conics.
\end{theorem}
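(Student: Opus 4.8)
The plan is to mimic the construction of Theorem~\ref{thm:construction1}, but on the reducible cubic $C\cup L$ (with $C$ an irreducible conic and $L$ a disjoint line) for which Lemma~\ref{lem:coconiconreducible} was proved. Using the parametrizations $\psi_C,\psi_L\colon\R/\mathbb{Z}\to C,L$ from \cite[Proposition 7.3]{GT}, I would take, for a large integer $n$, the set $H=\{\psi_C(j/n):j\in\mathbb{Z}/n\mathbb{Z}\}\cup\{\psi_L(j/n):j\in\mathbb{Z}/n\mathbb{Z}\}$, so that $|H|=2n$ and exactly $|H|/2=n$ points of $H$ lie on the line $L$. Since $C$ carries $n\ge 5$ points of $H$ while any other conic meets $C$ in at most four points, and $C$ misses $H\cap L$, the set $H$ is not co-conic; and any line other than $L$ meets $C\cup L$ in at most three points of $H$, so $L$ is the only line through more than three points of $H$. (If desired one can arrange that $H$ avoids the points of $C\cup L$ at infinity by an appropriate choice of the curves and of the embedding; this does not affect anything below.)

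The heart of the proof is to show that, for $n$ large, every ordinary conic of $H$ is tangent to $C$ or tangent to $L$, apart from $O(n^3)$ reducible ones. Given an ordinary conic $C'$, B\'ezout bounds $|H\cap C'|$ by the number of intersection points of $C'$ with $C\cup L$: at most two on $L$ (otherwise $C'\supseteq L$, impossible for large $n$, since the complementary line would meet $C$ in too many points) and at most four on $C$ (otherwise $C'=C$, again impossible for large $n$). So the five points of $H\cap C'$ split as $(2,3)$ or $(1,4)$ on $(L,C)$. If $C'$ met both $C$ and $L$ transversally, then by Lemma~\ref{lem:coconiconreducible} the remaining B\'ezout intersection point of $C'$ with $C\cup L$ would have $\psi$-parameter forced to be a multiple of $1/n$, i.e.\ a sixth point of $H$ on $C'$, contradicting ordinariness. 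Hence $C'$ is tangent to $C$ (then necessarily with simple contact at one point and transverse elsewhere, configuration $(2,3)$) or tangent to $L$ (configuration $(1,4)$); a bitangent conic is impossible, as it would meet $C\cup L$ in only four distinct points. For reducible $C'=\ell_1\cup\ell_2$ with $\ell_1,\ell_2\ne L$, each of $\ell_1,\ell_2$ carries at most three points of $H$, and its ``third'' point on $C\cup L$ is itself forced to lie in $H$; a short case analysis then bounds the number of reducible ordinary conics of $H$ by $O(n^3)$, which is absorbed into the error term.

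Finally I would count. By the tangency case of Lemma~\ref{lem:coconiconreducible}, an ordinary conic tangent to $C$ at $\psi_C(c)$ through $\psi_C(d),\psi_C(e)$ and $\psi_L(\alpha),\psi_L(\beta)$ determines, and is determined by, a tuple $(c,\{d,e\},\{\alpha,\beta\})$ of parameters in $\tfrac1n\mathbb{Z}/\mathbb{Z}$ with $2c+d+e+\alpha+\beta=0$; the tangency point $c$ and the two unordered pairs are intrinsic to the conic, so there is no overcounting. For each $c$ and $\{\alpha,\beta\}$ there are $n/2+O(1)$ admissible $\{d,e\}$, giving $n\binom n2(n/2)+O(n^3)=\tfrac14 n^4+O(n^3)$ ordinary conics tangent to $C$. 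Similarly an ordinary conic tangent to $L$ at $\psi_L(\alpha)$ corresponds to a tuple $(\alpha,\{c,d,e,f\})$ with $2\alpha+c+d+e+f=0$; since the number of four-element subsets of $\mathbb{Z}/n\mathbb{Z}$ with a prescribed sum is $\tfrac1n\binom n4+O(n^2)$ (count ordered quadruples with the last entry forced, then subtract the $O(n^2)$ with a repeated entry), there are $n\bigl(\tfrac1n\binom n4+O(n^2)\bigr)=\tfrac1{24}n^4+O(n^3)$ of these. Adding the three contributions, $H$ determines $\bigl(\tfrac14+\tfrac1{24}\bigr)n^4+O(n^3)=\tfrac{7}{24}n^4+O(n^3)$ ordinary conics; substituting $n=|H|/2$ gives the claimed bound $\tfrac{7}{384}|H|^4+O(|H|^3)$.

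The main obstacle will be the classification step, especially the precise argument that a conic transverse to both $C$ and $L$ cannot be ordinary: this requires careful bookkeeping of the B\'ezout intersection multiplicities in each configuration, attention to the degenerate possibilities (tangencies of $C'$ to $C$ or $L$, intersection points at infinity, and coincidences among the forced parameters), and a clean reading of what Lemma~\ref{lem:coconiconreducible} gives when two of its six points merge. The reducible-conic count is elementary but fiddly; fortunately every family there has size $O(n^3)$ and so disappears into the error term.
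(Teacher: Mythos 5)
Your proposal is correct and follows essentially the same route as the paper: the same construction $H=\psi_C(G)\cup\psi_L(G)$ on the reducible cubic, the same classification of ordinary conics into those tangent to $C$ (contributing $\tfrac14 n^4$) and those tangent to $L$ (contributing $\tfrac1{24}n^4$) via Lemma \ref{lem:coconiconreducible}, and the same arithmetic yielding $\tfrac{7}{384}|H|^4+O(|H|^3)$. If anything, you are more explicit than the paper about why transverse conics cannot be ordinary and about the reducible ordinary conics (which the paper silently absorbs into the type-two five-tuples).
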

\begin{proof}
Set $H_1=\psi_C(G)$, $H_2 = \psi_L(G)$, and $H = H_1\cup H_2$. We have $|H| = 2|G|$.
The ordinary conics for $H$ correspond to the following two types of five-tuples.

\begin{itemize}

\item One point $p\in L$ and four points $q,r,s,t\in C$ such that
\[\psi_L^{-1}(p)+\psi_L^{-1}(p) + \psi_C^{-1}(q)+ \psi_C^{-1}(r)+ \psi_C^{-1}(s)+ \psi_C^{-1}(t) =0.\]

\item Two points $p,q\in L$ and three points $r,s,t\in C$  such that
\[\psi_L^{-1}(p)+\psi_L^{-1}(q) + \psi_C^{-1}(r)+ \psi_C^{-1}(r)+ \psi_C^{-1}(s)+ \psi_C^{-1}(t) =0.\]

\end{itemize}
We count the number of five-tuples of both types.
For the first type, we can choose $p\in L$ in $|G|$ ways,
and $q,r,s\in C$ in $\binom{|G|}{3}$ ways; $t$ is then determined.
One such five-tuple occurs with multiplicity $\binom{4}{3}$, so the number of five-tuples of the first type is
\[\frac{1}{\binom{4}{3}}|G|\cdot \binom{|G|}{3} = \frac{1}{24} |G|^4 +O(|G|^3)= \frac{1}{384}|H|^4 + O(|H|^3). \]

For the second type,
we can choose $p,q\in L$ in $\binom{|G|}{2}$ ways,
$r\in C$ in $|G|$ ways, and then $s\in C$ in at most $|G|-1$ ways; $t$ is then determined.
One such five-tuple occurs with multiplicity two (note that $r$ is determined as the point on $C$ where the ordinary conic is tangent; only $s$ and $t$ could be interchanged).
Hence the number of five-tuples of the second type is
\[\frac{1}{2}\binom{|G|}{2}\cdot |G| \cdot (|G|-1) = \frac{1}{4}|G|^4 +O(|G|^3) = \frac{1}{64}|H|^4+O(|H|^4).\]
Altogether we get the number of ordinary conics stated in the theorem.
\end{proof}

We note that Green and Tao \cite{GT} define quasi-group laws on any cubic, including on a union of three lines.
But the two types used above (elliptic curves and conics with lines) are the only ones for which the underlying group has arbitrarily large finite subgroups\footnote{The \emph{acnodal cubic} also has this property, but the resulting construction would be similar to that for elliptic curves.};
the other cases do not have this property over $\R$.
Because of this, these other curves would not give good constructions (as is the case for ordinary lines).

\end{document}